\newcommand\enn{\mathbb N}
\newcommand\err{\mathbb R}
\newcommand{\eps}{\varepsilon}
\newcommand{\vertiii}[1]{{\left\vert\kern-0.25ex\left\vert\kern-0.25ex\left\vert#1\right\vert\kern-0.25ex\right\vert\kern-0.25ex\right\vert}}
\newtheorem{thm}{Theorem}[section]
\newtheorem{prop}[thm]{Proposition}
\newtheorem{cor}[thm]{Corollary}
\theoremstyle{definition}
\newtheorem{defn}[thm]{Definition}
\newtheorem{example}[thm]{Example}
\newtheorem{claim}[thm]{Claim}
\theoremstyle{remark}
\newtheorem{rem}[thm]{Remark}
\title{Strongly extreme points and approximation properties}
\author[T.~A.~Abrahamsen]{Trond A.~Abrahamsen}
\address{Department of Mathematics, University of
  Agder, Postboks 422, 4604 Kristiansand, Norway.}
\email{trond.a.abrahamsen@uia.no}
\urladdr{http://home.uia.no/trondaa/index.php3}
\author[P. H{\'a}jek]{Petr H{\'a}jek}
\address{Mathematical Institute\\ Czech Academy of
  Science\\ \v{Z}itn{\'a} 25, 115 67 Praha 1, Czech Republic and
  Department of Mathematics \\ Faculty of Electrical Engineering, Czech
  Technical University in Prague\\ Zikova 4, 160 00, Prague, Czech Republic}
\email{hajek@math.cas.cz}
\author[O.~Nygaard]{Olav Nygaard}
\address{Department of Mathematics, University of Agder,
  Postboks 422, 4604 Kristiansand, Norway}
\email{Olav.Nygaard@uia.no}
\urladdr{http://home.hia.no/$~$olavn/}
\author[S.~Troyanski]{Stanimir Troyanski}
\address{Institute of Mathematics and Informatics,
Bulgarian Academy of Science, bl.8,
acad. G. Bonchev str. 1113 Sofia, Bulgaria
and
Departamento de Matem{\'a}ticas, Universidad de Murcia,
Campus de Espinardo, 30100 Espinardo (Murcia), Spain
}
\email{stroya@um.es}
\thanks{The second author was financially supported by GACR 16-073785
  and RVO:  67985840. The fourth
  author was partially supported by MTM2014-54182-P and the
  Bulgarian National Scientific Fund under Grant DFNI-I02/10.}
\keywords {denting point, strongly extreme point}
\subjclass[2010]{Primary: 46B20; Secondary: 46B04}
\begin{document}
\begin{abstract}
  We show that if $x$ is a strongly extreme point of a bounded closed convex subset of a Banach space and the identity has a geometrically and topologically good enough local approximation at $x$, then $x$ is already a denting point. It turns out that such an approximation of the identity exists at any strongly extreme point of the unit ball of a Banach space with the unconditional compact approximation property. We also prove that every Banach space with a Schauder basis can be equivalently renormed to satisfy the sufficient conditions mentioned. In contrast to the above results we also construct a non-symmetric norm on $c_0$ for which all points on the unit sphere are strongly extreme, but none of these points are denting.  
\end{abstract}

\maketitle
\section{Introduction}\label{sec:intro}
Let $X$ be a (real) Banach space and denote by $B_X$ its unit ball, $S_X$ its
unit sphere, and $X^*$ its topological dual. Let $A$ be a non-empty
set in $X$. By a slice of $A$ we mean a subset of $A$ of the form
\begin{align*}
  S(A, x^*,\eps):=\{x \in A: x^*(x) > M - \eps\}
\end{align*}
where $\eps >0$, $x^* \in X^*$ with $x^* \not=0$, and $M = \sup_{x
  \in A} x^*(x)$. We will simply write $S(x^*,\eps)$ for a slice of a
set when it is clear from the setting what set we are considering slices of. 

\begin{defn} \label{defn:corner-points} Let $B$ be a non-empty 
  bounded closed convex set in a Banach space $X$ and
  let $x \in B$. Then $x$ 
  \begin{enumerate}
     \item [a)] is an extreme point of $B$ if for any $y, z$ in $B$ we
       have \[x =  \frac{y+z}{2} \Rightarrow y = z = x.\] 
     \item [b)] is a strongly extreme point of $B$ if for any
       sequences $(y_n)_{n=1}^\infty,  (z_n)_{n=1}^\infty$ in $B$ we have \[
      \lim_n \|x - \frac{y_n + z_n}{2}\| = 0 \Rightarrow \lim_n \|y_n -
      z_n\| = 0.\] When B is the unit ball, the above condition can be replaced by
      \[\lim_n \|x \pm x_n \| = 1 \Rightarrow  \lim_n \| x_n \| = 0.\] In
      this case we say that the norm is midpoint locally uniformly rotund (MLUR) at $x$.
     \item [c)] is a point of continuity for the map $\Phi:B\to X$ if $\Phi$ is weak to norm continuous at $x$. When $\Phi$ is the identity mapping we just say that $x$ is a point of continuity (PC). 
     \item [d)] is a denting point of $B$ if for every $\eps > 0$ and
       $\delta >0$ there exists a slice $S(x^*,\delta)$ of $B$ with diameter less than $\eps.$
     \item [e)] is a locally uniformly rotund (LUR) point of $B_X$ if for any sequence
       $(x_n)_{n=1}^\infty$ we
       have \[\lim_n \|x + x_n\| = 2\lim_n \|x_n\|=2\|x\|=2 \Rightarrow \lim_n \|x - x_n\| = 0.\]
    \end{enumerate}
\end{defn}

It is well known that LUR points are denting points and that denting points
are strongly extreme points \cite{MR661446}. Trivially strongly
extreme points are extreme points.

The importance of denting points became clear in the sixties when the Radon-Nikod\'{y}m Property (RNP) got its geometric description. In particular, it became clear that extreme points in many cases are already denting as every bounded closed convex set in a space with the  
RNP has at least one denting point. The ``extra'' an extreme point needs to become a denting points is precisely described in the following

\begin{thm}\cite{LLT}\label{thm:LLT} Let $x$ be an extreme point of continuity of a bounded closed convex set $C$ in $X$. Then $x$ is a denting point of $C$.
\end{thm}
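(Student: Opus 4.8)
The plan is to argue by contradiction: suppose $x$ is an extreme point and a point of continuity of $C$, yet \emph{not} a denting point of $C$, and derive a contradiction. The first step is the standard separation reformulation of denting: by the Hahn--Banach theorem, $x$ is a denting point of $C$ exactly when $x\notin\overline{\mathrm{conv}}\bigl(C\setminus B(x,\eps)\bigr)$ for every $\eps>0$. Indeed, a functional strictly separating $x$ from that closed convex hull cuts off a slice of $C$ which still contains $x$ and, once taken thin enough, lies inside $B(x,\eps)$. So our assumption provides some $\eps>0$ with $x\in\overline{\mathrm{conv}}\bigl(C\setminus B(x,\eps)\bigr)$.

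Next I would exploit the point-of-continuity hypothesis. It gives a weak neighbourhood $W=x+\{y:|x_i^*(y)|<\delta,\ i=1,\dots,n\}$ of $x$ with $\diam(C\cap W)<\eps$; hence $C\cap W\subseteq B(x,\eps)$, so $C\setminus B(x,\eps)\subseteq C\setminus W$, and $C\setminus W$ is contained in the finite union of the ``slabs'' $C_j:=C\cap\{g_j\ge\delta\}$, where each $g_j$ runs through the affine functionals $\pm\bigl(x_i^*(\cdot)-x_i^*(x)\bigr)$ (with $\|x_i^*\|\le1$). Each $C_j$ is bounded, closed and convex, and $\mathrm{dist}(x,C_j)\ge\delta$. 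Therefore $x$ belongs to $\overline{\mathrm{conv}}\bigl(\bigcup_jC_j\bigr)$, which is the norm closure of $\bigl\{\sum_jt_jc_j:c_j\in C_j,\ t_j\ge0,\ \sum_jt_j=1\bigr\}$, so there is a \emph{sequence} $y_k=\sum_jt_j^{(k)}c_j^{(k)}\to x$ in norm. Pass to a subsequence along which $t_j^{(k)}\to t_j$ for every $j$; the terms whose weight tends to $0$ vanish in norm, and after discarding them and renormalising we may assume all surviving weights are strictly positive. Splitting off one of these slabs turns the statement into
\[
\lambda_kc_1^{(k)}+(1-\lambda_k)d_k\longrightarrow x\ \text{in norm},\qquad c_1^{(k)}\in C_1,\quad d_k\in C,\quad \lambda_k\to\lambda\in(0,1],
\]
with $\|c_1^{(k)}-x\|\ge\delta$ for all $k$.

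When $\lambda=1$ the proof ends at once, since then $c_1^{(k)}\to x$ in norm, contradicting $\|c_1^{(k)}-x\|\ge\delta$. The hard part is the balanced case $\lambda\in(0,1)$: a proper convex combination of two points of $C$, one of them a fixed distance from $x$, cannot converge to $x$ at a \emph{strongly} extreme point, but plain extremity does not rule it out. So the heart of the matter is to promote ``extreme $+$ PC'' to ``strongly extreme''. I would do this in the bidual: $\overline{C}^{\,w^*}\subseteq X^{**}$ is weak$^*$-compact, and the PC hypothesis forces the basic weak$^*$-neighbourhoods of $x$ in $\overline{C}^{\,w^*}$ to have small norm diameter (the norm diameter of $\overline{C\cap W}^{\,w^*}$ is at most that of $C\cap W$, by weak$^*$-lower semicontinuity of the norm; in particular $x\notin\overline{C\setminus B(x,\eps)}^{\,w^*}$ for every $\eps>0$). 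Taking weak$^*$-cluster points of the sequences $\bigl(c_j^{(k)}\bigr)_k$ along a common subnet then writes $x$ as a convex combination, with strictly positive weights, of points of $\overline{C}^{\,w^*}$ each at distance $\ge\delta$ from $x$, so the whole argument reduces to the assertion that $x$ remains an \emph{extreme} point of $\overline{C}^{\,w^*}$. Showing that the extreme-point property of $x$ survives the passage to $\overline{C}^{\,w^*}$ — which is precisely where the PC hypothesis must be used in an essential way — is the step I expect to be the main obstacle; granting it, the contradiction, and hence the theorem, follow.
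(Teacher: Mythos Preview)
The paper does not supply its own proof of this statement; it is quoted from \cite{LLT} as a known result and used as a tool, so there is no in-paper argument to compare against.

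Your outline follows the standard opening correctly: the Hahn--Banach reformulation of denting, the weak neighbourhood $W$ of small diameter furnished by the PC hypothesis, and the decomposition $C\setminus W\subset\bigcup_j C_j$ into finitely many closed convex slabs bounded away from $x$ are the right first moves, and you correctly isolate the crux as the ``balanced'' case $\lambda\in(0,1)$. But what you have written is, by your own admission, not a proof: the decisive step --- that $x$ remains an extreme point of $\overline{C}^{\,w^*}$ --- is flagged as ``the main obstacle'' and then simply assumed. Your parenthetical remarks do show that $x$ is a weak$^*$-to-norm point of continuity for $\overline{C}^{\,w^*}$ (indeed $\overline{C}^{\,w^*}\cap W^{**}\subset\overline{C\cap W}^{\,w^*}$ because $W^{**}$ is weak$^*$ open, so the small diameter transfers), but PC in the bidual does not by itself yield extremality there. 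Worse, the step you defer is essentially equivalent to the theorem: granted extremality in $\overline{C}^{\,w^*}$, Choquet's lemma on that weak$^*$-compact convex set together with the transferred PC property immediately produces norm-small slices and hence denting; conversely, once the theorem is known, $x$ is denting in $C$ and therefore extreme in $\overline{C}^{\,w^*}$. So postponing that step postpones the entire content. A complete argument must either give an independent proof of extremality in the bidual, or --- as in the original \cite{LLT} --- stay inside $X$ and establish directly that an extreme point of $C$ cannot belong to $\overline{\mathrm{conv}}\bigl(\bigcup_j C_j\bigr)$ when the $C_j\subset C$ are finitely many closed convex sets not containing $x$.
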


It is well known that all points of the unit sphere of $\ell_1$ are points of continuity for the unit ball $B_{\ell_1}$. So from Theorem \ref{thm:LLT} we get that every extreme point of the unit ball of any subspace of $\ell_1$ automatically gets the ``extra'' to become denting.

However, despite the theoretical elegance of Theorem~\ref{thm:LLT}, it is not always easy to to check whether the identity mapping is weak to norm continuous at a certain point of a bounded closed convex set. For this reason it is natural to look for geometrical conditions which ensure weak to norm continuity of the identity operator at $x$ when we approximate it strongly by maps that are weak to norm continuous at $x$.

One such idea could be to assume that $x$ is strongly extreme (not just extreme as in Theorem~\ref{thm:LLT}) and that the identity map is approximated strongly by finite rank operators. But this is not enough to give the extreme point the ``extra'' needed to be denting: Consider $x=x(t)\equiv 1\in B_{C[0,1]}$. Then $x$ is strongly extreme in $B_{C[0,1]}$, but the identity map $I:B_{C[0,1]}\to B_{C[0,1]}$ is not weak to norm continuous there (see the next paragraph), and $\lim_n\|P_n x-x\|= 0$, where $(P_n)$ are the projections corresponding to the Schauder basis in $C[0,1]$. Clearly $P_n$ is weak to norm continuous at any point of $B_{C[0,1]}$ (as any compact operator is).

Actually $C[0,1]$ belongs to the class LD2 of Banach spaces where all slices of the unit ball have diameter 2. Naturally, in such spaces no point of the unit sphere can be a PC point. See e.g. the references in \cite{MR3499106} for more information about the class LD2.  

Assuming $x$ is strongly extreme, we need to make stronger assumptions of the approximating sequence of the identity. One such condition which we impose is related to the behaviour of the approximating mappings close to the point $x$. In particular we obtain as a corollary that in
Banach spaces with the unconditional compact approximation property
(UKAP) (see Definition \ref{def:ucap}), every strongly extreme point in the unit ball is PC and therefore denting. In particular, we have that this conclusion holds for Banach spaces with an unconditional basis with unconditionally basis constant 1. Further we show that every Banach space with a Schauder basis can be renormed to satisfy the conditions of Theorem \ref{thm:se=dent}. 

Nevertheless we construct, in Section~\ref{sec:cnullgeo}, a closed convex body in $c_0$ where all boundary points are strongly extreme, but none of them is denting. This body is not symmetric; we refer to its gauge as a non-symmetric equivalent norm on $c_0$. Thus, we construct a non-symmetric equivalent norm on $c_0$ for which all points
on the unit sphere are strongly extreme points, but none of these
points is denting. In fact every slice of the unit ball of $c_0$ with
this non-symmetric norm has diameter at least $1/\sqrt{2}.$ 

In Section \ref{sec:eqnorms} we investigate when equivalent norms conserve strongly extreme and denting points of the corresponding unit balls.

The notation and conventions we use are standard and follow
\cite{MR1863688}. When considered necessary, notation and concepts are
explained as the text proceeds.

\section{Weak to norm continuity of the identity map}\label{sec:se=dent}

Our most general result on how to force a strongly extreme point $x$ to be denting in terms of approximating the identity map $I:X\to X$ at $x$ is the following
 
\begin{thm}\label{thm:se=dent}
  Let $x$ be a strongly extreme point of a non-empty bounded closed convex set $C$ in a Banach space $X$. Let $x$ be a point of continuity for a sequence $\Phi_n:C\to X, n=1,2,\ldots $ of maps such that
  \begin{align}
    \label{eq:36}
    \lim_{n}\|\Phi_n x - x\| = 0
  \end{align}
  and 
  \begin{align}
    \label{eq:26}
    \lim_n\lim_{\eps \to 0^+} f_n(\eps) = 0,
  \end{align}
where
\begin{align*}
  f_n(\eps) = \sup\{\mbox{dist}((1+\lambda)\Phi_n y -\lambda y, C): y \in C,
    \|\Phi_nx-\Phi_ny\| \le \eps\},
\end{align*}  
  for some $\lambda \in (0,1].$
   Then $x$ is a denting point of $C$.
\end{thm}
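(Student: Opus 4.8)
The plan is to reduce the statement to Theorem~\ref{thm:LLT}. Since strongly extreme points are extreme, it suffices to prove that $x$ is a point of continuity of $C$; then Theorem~\ref{thm:LLT} gives at once that $x$ is denting. So assume, for contradiction, that the identity $I\colon C\to X$ is \emph{not} weak-to-norm continuous at $x$: there is $\eps_*>0$ such that every weak-open neighbourhood $V$ of $x$ contains a point $y\in V\cap C$ with $\|y-x\|>\eps_*$. I will also use that the sequential condition in Definition~\ref{defn:corner-points}(b) is equivalent to the $\eps$--$\delta$ statement: for every $\eta>0$ there is $\delta>0$ so that $u,v\in C$ and $\|x-\tfrac{u+v}{2}\|<\delta$ force $\|u-v\|<\eta$ (if this failed one would build sequences violating Definition~\ref{defn:corner-points}(b)). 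Applying it with $\eta=\lambda\eps_*$ gives $\delta_0>0$.

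Next fix $\eta>0$ with $3\eta<\min\{\delta_0,\eps_*/2\}=:\gamma_0$. By~\eqref{eq:36} and~\eqref{eq:26} choose $n$ with $\|\Phi_nx-x\|<\eta$ and $\lim_{\eps\to0^+}f_n(\eps)<\eta$; since $f_n$ is non-decreasing in $\eps$, there is $\eps_0\in(0,\gamma_0/3)$ with $f_n(\eps_0)<\eta$. As $\Phi_n$ is weak-to-norm continuous at $x$, pick a weak-open $V\ni x$ with $\|\Phi_ny-\Phi_nx\|<\eps_0$ for all $y\in V\cap C$, and then, by the contradiction hypothesis, $y\in V\cap C$ with $\|y-x\|>\eps_*$. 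For this $y$ we have $\|\Phi_nx-\Phi_ny\|\le\eps_0$, so $\mbox{dist}((1+\lambda)\Phi_ny-\lambda y,C)\le f_n(\eps_0)<\eta$, and we may pick $z\in C$ with $\|(1+\lambda)\Phi_ny-\lambda y-z\|<\eta$.

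The core is then elementary. Put $\mu=\lambda/(1+\lambda)\in(0,\tfrac12]$. Dividing the last inequality by $1+\lambda$ and combining it with $\|\Phi_ny-\Phi_nx\|<\eps_0$ and $\|\Phi_nx-x\|<\eta$ yields
\[
\bigl\|x-(\mu y+(1-\mu)z)\bigr\|<2\eta+\eps_0=:\gamma<\gamma_0 .
\]
Since $0<2\mu\le1$, the vector $w:=2\mu y+(1-2\mu)z$ lies in $C$, and $\mu y+(1-\mu)z=\tfrac{w+z}{2}$, so $\|x-\tfrac{w+z}{2}\|<\gamma<\delta_0$ and strong extremity gives $\|w-z\|<\lambda\eps_*$. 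Finally, writing $e:=x-(\mu y+(1-\mu)z)$ (so $\|e\|<\gamma$) we get $x-z=\mu(y-z)+e$ and hence $y-x=(1-\mu)(y-z)-e$, while $\|y-z\|=\|w-z\|/(2\mu)$; therefore
\[
\|y-x\|\le\frac{1-\mu}{2\mu}\,\|w-z\|+\gamma=\frac{1}{2\lambda}\,\|w-z\|+\gamma<\frac{\eps_*}{2}+\gamma<\eps_* ,
\]
contradicting $\|y-x\|>\eps_*$. Thus $x$ is a point of continuity of $C$, and Theorem~\ref{thm:LLT} finishes the proof.

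I expect the main obstacle to be the algebraic step: the hypothesis on $f_n$ produces the \emph{unbalanced} combination $(1+\lambda)\Phi_ny-\lambda y$ near $C$, and one must turn the resulting approximate identity $x\approx\mu y+(1-\mu)z$ into a genuine midpoint between two points of $C$ (the substitution $w=2\mu y+(1-2\mu)z$), and then verify that the comparison constant $\tfrac{1-\mu}{2\mu}=\tfrac1{2\lambda}$ depends only on the fixed $\lambda$, so that shrinking $\gamma$ indeed pushes $\|y-x\|$ below $\eps_*$. The remaining work is bookkeeping: the order in which $\eps_*,\delta_0,\eta,n,\eps_0,V,y,z$ are chosen, and the standard equivalence between the sequential and $\eps$--$\delta$ forms of strong extremity.
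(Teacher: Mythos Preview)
Your proof is correct and follows essentially the same strategy as the paper: reduce to Theorem~\ref{thm:LLT} by showing $x$ is PC, use the hypothesis on $f_n$ to find $z\in C$ close to $(1+\lambda)\Phi_ny-\lambda y$, pair $z$ with a second point of $C$ so that their midpoint is near $x$, and let strong extremity force $y$ near $x$. The paper packages the PC step as a separate Proposition~\ref{prop:PC} and argues directly (not by contradiction), taking as the second point $y^{+}=(1-\lambda)x+\lambda y$ and writing everything via $\Psi=I-\Phi$; your choice $w=2\mu y+(1-2\mu)z$ with $\mu=\lambda/(1+\lambda)$ is, up to the approximation $\Phi_ny\approx x$, the very same point, so the difference is purely cosmetic.
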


The proof follows from Theorem~\ref{thm:LLT} and the next proposition which is an interplay between weak and norm topology. With $B(x,\rho)$ we denote the ball with center at $x$ and radius $\rho$.

\begin{prop}\label{prop:PC} Let $x$ be a strongly extreme point of a convex set $C$ of a normed space $X$ and let $0<\lambda\leq 1$. Assume that for every $\eta>0$ there exist a weak neighbourhood $W$ of $x$ and a map $\Phi:W\cap C\to X$ such that
\begin{align} 
\label{eq:sk3}
\Phi(W\cap C)\subset B(x,\eta)
\end{align}
and
\begin{align} 
\label{eq:sk4}
 \sup_{w\in W\cap C}\mbox{dist}((1+\lambda)\Phi w -\lambda w,C)<\eta.
\end{align} 
Then $x$ is PC.
\end{prop}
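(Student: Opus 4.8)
The plan is to show directly that $x$ is a point of continuity for the identity on $C$, i.e.\ that for every $\eps>0$ there is a weak neighbourhood $V$ of $x$ with $V\cap C\subset B(x,\eps)$; the conclusion that $x$ is denting then follows from Theorem~\ref{thm:LLT} (since a strongly extreme point is in particular an extreme point). So fix $\eps>0$. The idea is to feed a suitably small $\eta=\eta(\eps)$ into the hypothesis, obtaining $W$ and $\Phi$, and then argue that on a possibly smaller weak neighbourhood $V\subset W$ the point $w$ cannot be far from $x$: if it were, then using $\Phi$ we could build two sequences in $C$ whose midpoints converge to $x$ but which stay far apart, contradicting that $x$ is strongly extreme.

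Concretely, from \eqref{eq:sk4}, for $w\in W\cap C$ choose $c_w\in C$ with $\|(1+\lambda)\Phi w-\lambda w-c_w\|<\eta$. Rearranging, $\Phi w = \tfrac{\lambda}{1+\lambda}w+\tfrac{1}{1+\lambda}c_w + (\text{error of size}<\eta/(1+\lambda))$, so $\Phi w$ is, up to an $O(\eta)$ error, a proper convex combination of $w$ and the point $c_w\in C$. Equivalently, writing $t=\lambda/(1+\lambda)\in(0,1/2]$, we have $\Phi w \approx t\,w+(1-t)c_w$. Now consider the two points $y_w:=c_w$ and, on the other side, a point obtained by reflecting: since $x$ is strongly extreme in $C$ we want to exhibit $w$ and $c_w$ symmetrically about $x$ up to small error. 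The cleanest route: set $u_w := \tfrac1t\bigl(\Phi w-(1-t)c_w\bigr)$, so $u_w\in B(x,O(\eta/t))$ is close to $x$ by \eqref{eq:sk3}, and $u_w = w$ up to error $O(\eta/t)$. Then $w = t^{-1}\Phi w - (t^{-1}-1)c_w + O(\eta/t)$, which exhibits $x$ (up to $O(\eta/t)$, using $\Phi w\approx x$) as a convex combination $t\cdot(\text{something near }x) + (1-t)\cdot c_w$ forcing, via the strongly extreme (MLUR-type) property applied with the fixed ratio $t$, that $c_w$ is near $x$ and hence $w$ is near $x$.

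The precise mechanism is the following quantitative reformulation of ``strongly extreme'': since $x$ is strongly extreme in $C$ and $t\in(0,1/2]$ is fixed, for every $\eps>0$ there is $\gamma>0$ such that whenever $a,b\in C$ satisfy $\|t a+(1-t)b-x\|<\gamma$ one has $\|a-b\|<\eps$ (this is a standard compactness-free argument: if not, one gets sequences violating strong extremality after passing to the normalized gap; one should include a short lemma-style justification, or cite that for extreme-type notions the ``uniform'' version with a fixed convex weight is equivalent). Granting this, pick $\eta$ small enough that all the $O(\eta/t)$ errors above are $<\gamma/2$ and that $\eta<\gamma/2$. Then for $w\in W\cap C$ we get, with $a:=w$ (or rather $u_w\in C$? — here one must be careful that $u_w$ need not lie in $C$; the fix is to keep $b=c_w\in C$ and $a=w\in C$ and write $x\approx \Phi w\approx t w+(1-t)c_w$ directly from the displayed rearrangement of \eqref{eq:sk4}, so both $a,b\in C$). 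Thus $\|t w+(1-t)c_w-x\|<\gamma$, whence $\|w-c_w\|<\eps$, and since $c_w$ and $w$ have convex combination near $x$ this yields $\|w-x\|<\eps'$ for a controlled $\eps'$. Taking $V=W$ finishes the argument.

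The main obstacle, and the step deserving the most care, is the passage from the defining $\eps$--$\delta$ form of ``strongly extreme'' (stated for sequences, with midpoints, i.e.\ weight $1/2$) to the quantitative statement with a \emph{fixed} convex weight $t=\lambda/(1+\lambda)$ and with all four points varying in $C$: one must verify that no compactness is needed and that the weight $t\neq 1/2$ causes no trouble (it does not — one rescales, noting $t$ is bounded away from $0$ by $\lambda/2$). A secondary technical point is ensuring that every auxiliary point to which the strong-extremality estimate is applied genuinely lies in $C$; this forces one to apply the estimate to the pair $(w,c_w)$ rather than to any rescaled vector, and to absorb the $\eta$-error from \eqref{eq:sk4} and the $\eta$-error from \eqref{eq:sk3} into the same small parameter. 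Once the quantitative lemma is in place, the rest is bookkeeping with the triangle inequality.
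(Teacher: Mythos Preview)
Your approach is correct but differs from the paper's. Both arguments start the same way: from \eqref{eq:sk4} pick $c_w\in C$ with $\|(1+\lambda)\Phi w-\lambda w-c_w\|<\eta$. You then note that with $t=\lambda/(1+\lambda)\in(0,1/2]$ one has $\|tw+(1-t)c_w-x\|<2\eta$ (using \eqref{eq:sk3}), and you apply a \emph{weighted} form of strong extremality to the pair $(w,c_w)\in C\times C$ to force $\|w-c_w\|$ small, hence $\|w-x\|$ small. This is sound, but it requires the auxiliary lemma that midpoint strong extremality implies the same statement for any fixed weight $t\in(0,1)$. That lemma is easy---for $t\le 1/2$ replace $a_n$ by $2t\,a_n+(1-2t)\,b_n\in C$ to reduce to midpoints---but you should actually prove it rather than gesture at ``rescaling''; your parenthetical remarks do not quite constitute an argument.

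The paper sidesteps this lemma by a different choice of the second point. It pairs $y^-:=c_w$ not with $w$ but with $y^+:=(1-\lambda)x+\lambda w$, which lies in $C$ by convexity. A two-line computation shows $y^\pm\approx\Phi w\pm\lambda(w-\Phi w)$, so $\tfrac{1}{2}(y^++y^-)\approx\Phi w\approx x$, and the \emph{midpoint} definition of strong extremality applies directly to give $\|y^+-y^-\|$ small; since $y^+-y^-\approx 2\lambda(w-\Phi w)$, one reads off $\|w-\Phi w\|$ small and hence $\|w-x\|$ small. The trade-off: your route is arguably more natural (the pair $(w,c_w)$ is handed to you) at the cost of an extra lemma; the paper's auxiliary point $y^+$ is less obvious but keeps the proof self-contained within the midpoint definition. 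Either way, you should excise the detour through $u_w$---you correctly recognised it need not lie in $C$, and it plays no role in the argument you finally settle on.
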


\begin{proof}
Since $x$ is strongly extreme point, for every $\varepsilon>0$ we can find $\delta>0$ such that
\begin{align} 
\label{eq:sk5}
\|x-\frac{u+v}{2}\|<\delta, \quad u,v\in C \Rightarrow \|u-v\|<\lambda\varepsilon.
\end{align}
Set $\eta=\min\{\delta,\lambda\varepsilon\}/2$. There is a weak neighbourhood $W$ of $x$ and a map $\Phi$ satisfying (\ref{eq:sk3}) and (\ref{eq:sk4}). Set $\Psi=I-\Phi$ and pick an arbitrary $w\in W\cap C$.    Put $y^+ = (1 - \lambda)x  + \lambda w.$ Since $x, w \in C$
   we get $y^+ \in C$ by convexity. Since 
  \begin{align*}
    \Phi w + \lambda \Psi w - y^+ = (1 - \lambda)(\Phi w -x),
  \end{align*}
  we have from (\ref{eq:sk3})
\begin{align}
\label{eq:sk6}
    \|\Phi w + \lambda \Psi w  - y^+\| \le (1 - \lambda) \eta <  \eta.
\end{align}
Having in mind (\ref{eq:sk4}) we can find $y^-\in C$ such that 
\begin{align}
\label{eq:sk7}
\|(\Phi w-\lambda \Psi w)-y^-\|<\eta.
\end{align}
   This and (\ref{eq:sk6}) imply
   \begin{align*}
     \|x - \frac{y^+ + y^-}{2}\| & \le \|x - \Phi w\| +
                                       \frac{1}{2}\|(\Phi w + \lambda
                                       \Psi w - y^+)+(\Phi w
                                       -\lambda \Psi w -
                                       y^-)\|\\ &< \|x -
                                                        \Phi w\| +
                                                        \eta \leq 2\eta.\\
   \end{align*}
From (\ref{eq:sk5}) we get
\[\|y^+-y^-\|<\lambda\varepsilon.\]
On the other hand, using again (\ref{eq:sk6}) and (\ref{eq:sk7}), we get
 \begin{align*}
      \|y^+ - y^-\| & = \|y^+ - (\Phi w + \lambda \Psi w) -
                          y^- + (\Phi w - \lambda \Psi w) +
                          2\lambda \Psi w\| \\ & > 2\lambda \|\Psi w\| - 2\eta.
    \end{align*}
Hence
\[2\lambda \|\Psi w\|<\|y^+-y^-\|+2\eta<\lambda\varepsilon+2\lambda\varepsilon=3\lambda\varepsilon.\]
This and (\ref{eq:sk3}) imply
\[\|w-x\|\leq\|\Phi w-x\|+\|\Psi w\|<2\varepsilon.\]
Since $w$ is an arbitrary element of $W\cap C$ we get that $W\cap C\subset B(x,2\varepsilon)$.
\end{proof}

\begin{rem}\label{rem:afterPC} If $x$ is PC for $C$ we get that $x$ satisfies the hypotheses of Proposition~\ref{prop:PC} just taking $\Phi=I, \lambda\in (0,1]$.
\end{rem}

\begin{proof}[Proof of Theorem~\ref{thm:se=dent}]
Let $\{\varepsilon_n\}$ be a sequence of positive numbers tending to 0. Since $\Phi_n:C\to X, n=1,2,\ldots$ is weak to norm continuous at $x$ there is weak neighbourhood $V_n$ of $x$ such that 
\[\Phi_n(V_n\cap C)\subset B(x,\varepsilon_n),\:\:n=1,2,\ldots\]
Thus the conditions of Theorem~\ref{thm:se=dent} imply that for every $\eta>0$ we can find $n=n(\eta)$ such that (\ref{eq:sk3}) and (\ref{eq:sk4}) hold for $W=V_n$
and $\Phi=\Phi_n$.
\end{proof}

Recall that every linear compact operator is weak to norm continuous on bounded sets. This together with Theorem~\ref{thm:se=dent} gives

\begin{cor}\label{rem:se=dent}
  Let $(X, \|\cdot\|)$ be a Banach space and let $\vertiii
  \cdot$ be an equivalent (not necessarily symmetric)
  norm on $X$ with corresponding unit ball $C.$ Let $x$ be a strongly extreme point of $C$. Let $T_n:X\to X$ be linear compact operators such that 
  \begin{align}
    \label{eq:29a}
    \lim_n \|T_nx-x\|=0,
  \end{align}
   \begin{align}
    \label{eq:29}
    \lim_n\lim_{\eps\to 0^+} \sup\{\vertiii{(1+\lambda)T_ny - \lambda y}: \vertiii{y}
    \le 1, \|T_n(x - y)\| \le \eps\} = 1,
  \end{align}
for some $\lambda\in (0,1]$. In particular the above is satisfied if
 \begin{align}
    \label{eq:29b}
    \lim_n \vertiii{(1+\lambda)T_n-\lambda I}=1.
  \end{align}
Then $x$ is a denting point of $C$.
\end{cor}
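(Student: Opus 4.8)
The plan is to obtain Corollary~\ref{rem:se=dent} as a direct application of Theorem~\ref{thm:se=dent} with the choice $\Phi_n:=T_n|_C\colon C\to X$, so that the work amounts to checking the hypotheses of that theorem. Since each $T_n$ is a linear compact operator it is weak-to-norm continuous on bounded sets, and $C$ is bounded, being the unit ball of an equivalent norm; hence $x$ is a point of continuity for every $\Phi_n$. Condition~\eqref{eq:36} is literally~\eqref{eq:29a}. Thus the only substantive point is to deduce~\eqref{eq:26} from~\eqref{eq:29}.

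Put $g_n(\eps):=\sup\{\vertiii{(1+\lambda)T_ny-\lambda y}:\vertiii{y}\le1,\ \|T_n(x-y)\|\le\eps\}$, the quantity appearing in~\eqref{eq:29}; it is nondecreasing in $\eps$, so its limit as $\eps\to0^+$ exists and~\eqref{eq:29} reads $\lim_n\lim_{\eps\to0^+}g_n(\eps)=1$. Since $\Phi_nx=T_nx$ and $\Phi_ny=T_ny$, the supremum defining $f_n(\eps)$ in Theorem~\ref{thm:se=dent} runs over exactly the same set $\{y:\vertiii{y}\le1,\ \|T_n(x-y)\|\le\eps\}$. The idea is then to re-express the distance to $C$ through the gauge $\vertiii{\cdot}$: because $C$ is the unit ball of $\vertiii{\cdot}$ one has $\inf_{w\in C}\vertiii{z-w}\le(\vertiii{z}-1)^+$ for all $z$ (take $w=z/\vertiii{z}$ when $\vertiii{z}>1$), and since $\|\cdot\|$ and $\vertiii{\cdot}$ are equivalent there is $M>0$ with $\mbox{dist}(z,C)\le M(\vertiii{z}-1)^+$ for all $z\in X$. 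Applying this with $z=(1+\lambda)T_ny-\lambda y$ and using that $t\mapsto(t-1)^+$ is nondecreasing and continuous, one gets $f_n(\eps)\le M(g_n(\eps)-1)^+$, whence $\lim_n\lim_{\eps\to0^+}f_n(\eps)=0$, which is~\eqref{eq:26}. Theorem~\ref{thm:se=dent} now yields that $x$ is a denting point of $C$; this conclusion does not depend on which of the two equivalent norms we work with, since slices are determined by $X^*$ alone.

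For the ``in particular'' clause, assume~\eqref{eq:29b}. If $\vertiii{y}\le1$ then $\vertiii{(1+\lambda)T_ny-\lambda y}=\vertiii{((1+\lambda)T_n-\lambda I)y}\le\vertiii{(1+\lambda)T_n-\lambda I}$, so $g_n(\eps)\le\vertiii{(1+\lambda)T_n-\lambda I}$ and $\limsup_n\lim_{\eps\to0^+}g_n(\eps)\le1$ by~\eqref{eq:29b}. Conversely, $y=x$ is admissible for every $\eps>0$ (as $x\in C$ and $\|T_n(x-x)\|=0$), so $g_n(\eps)\ge\vertiii{(1+\lambda)T_nx-\lambda x}$; since $x$, being strongly extreme in the unit ball, satisfies $\vertiii{x}=1$, and $T_nx\to x$ in $\|\cdot\|$ and hence in $\vertiii{\cdot}$ by~\eqref{eq:29a}, the right-hand side tends to $\vertiii{x}=1$. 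Thus~\eqref{eq:29} holds and we are reduced to the previous case. I do not expect a deep obstacle here: the one thing requiring genuine care is the bookkeeping forced by the possibly non-symmetric gauge --- verifying that the estimate $\mbox{dist}(z,C)\le M(\vertiii{z}-1)^+$ and the operator-norm bound for $(1+\lambda)T_n-\lambda I$ do not tacitly rely on symmetry --- together with keeping straight which norm ``$\mbox{dist}$'' and ``denting point'' refer to throughout.
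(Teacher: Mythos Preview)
Your proposal is correct and follows essentially the same route as the paper: both reduce to Theorem~\ref{thm:se=dent} via the estimate $\mbox{dist}(u,C)\le k(\vertiii{u}-1)$ for $u\notin C$, obtained by comparing $u$ with $u/\vertiii{u}\in C$ and using the equivalence of norms. You additionally spell out why \eqref{eq:29b} implies \eqref{eq:29} and flag the non-symmetry bookkeeping, both of which the paper leaves implicit; these are welcome clarifications but do not constitute a different argument.
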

\begin{proof}
It is enough to prove that $(\ref{eq:29})$ implies $(\ref{eq:26})$. 
Indeed, since there exists $k >0$ such that
  $\|\cdot \| \le k\vertiii{\cdot},$ then for every $u \in X \setminus
  C$ we
   have
   \begin{align*}
     \mbox{dist}(u, C) &= \inf \{\|u-v\|: v \in C\} \le k
     \inf\{\vertiii{u-v}: v \in C\}\\ & \le k\vertiii{u -
                                        \frac{u}{\vertiii{u}} } =
                                        k(\vertiii{u} - 1).
\end{align*}
\end{proof}

\begin{rem}\label{rem:lim-se=dent}
  The functions $f_n$ defined in Theorem~\ref{thm:se=dent} can be discontinuous at
  $0$. Indeed, $X$ be a Banach space, $e \in B_X$, and
  $e^* \in S_{X^*}$ be such that $e^*(e) = \|e\| = \|e^*\| =1.$ Define a
  (norm one) projection $P$ on $X$ by $Px = e^*(x)e$ and put 
 \begin{align*}
    f(\eps) = \sup\{ \|Py - Ry\|: \|y\| \le 1, \|P(e - y)\| \le \eps\},
  \end{align*}
  where $R=I-P$.
  Now, if the norm $\|\cdot\|$ on $X$ is either strictly convex or Gateaux
  differentiable at $e^*,$ then $f$ is discontinuous at
  $0$. Indeed, let $\eps=0, \|y\| \le 1$, and $P(e - y) = 0$. We get $e^*(e - y)e
  = 0$. Hence $e^*(y) =  e^*(e) = 1$.
  By the strict convexity of the norm or the Gateaux differentiability of the norm
  at $e^*,$ we have $y =
  e$. This implies $Ry = Re = 0,$ so $f(0) = 1$. In order to prove
  that $f$ is discontinuous at $0$ we simply apply Corollary~\ref{rem:se=dent} with
  $T_n = P$. Since $e$ is strongly extreme, but not denting, we get
  $\lim_{\eps \to 0^+} f(\eps) > 1.$
\end{rem}
  
 Remark \ref{rem:lim-se=dent} shows that one cannot replace the limit
 condition
 (\ref{eq:29}) of Corollary~\ref{rem:se=dent}, by 
 \[\lim_n\sup\{\vertiii{(1+\lambda)T_ny-\lambda y}:\vertiii{y}\leq 1, T_nx=T_ny\}=1. \]

The conditions in Corollary~\ref{rem:se=dent} are essential. Let us 
illustrate this by examples.

\begin{example}\label{ex:app-ess}
  Consider the space $c$ of
  convergent sequences endowed with its natural norm. Let $e = (1, 1,
  \ldots) \in S_c$ and $P_n$ the projection
  on $c$ which projects vectors onto their $n$ first
  coordinates. Clearly $e$ is a strongly extreme point of $B_c$
  which is not denting. Moreover, it is evident that the condition
  $\lim_n\|P_ne - e\| = 0$ fails and that the condition (\ref{eq:29})
  (moreover (\ref{eq:29a}))
  holds for $\lambda = 1$ (and thus for all $ \lambda \in (0,1]).$
  It follows that the approximation condition in Corollary~\ref{rem:se=dent} is essential.
\end{example}

\begin{example}
  Consider again $c$ endowed with its natural norm.  Let $e \in c$ be is as in Example
   \ref{ex:app-ess}. Define a projection $P$ on $c$ by $Px =
   \lim_n x(n)e$ and put $P_n = P$ for all $n$. By construction $P_ne =
   e.$ For $z = (0, 1, 1, \ldots)$ we have $P_n z = P z  = e.$
  Now, for any $\lambda \in (0,1]$ we have
   \begin{align*}
     \|(1+\lambda)P_nz - \lambda z\| &= \|(1 + \lambda)e - \lambda z\|\\
        & = 1 + \lambda.                                  
   \end{align*}
  Thus
  \begin{align*}
   \lim_n\lim_{\eps \to 0^+}\sup\{\|(1+\lambda)P_ny - \lambda y\|: y \in
    B_X, \|P_n(e - y)\| < \eps\|\} \ge 1+ \lambda. 
  \end{align*} 
 It follows that the
 condition (\ref{eq:26}) in Theorem \ref{thm:se=dent} is essential. 
\end{example}

We now present our results in terms of an approximation property introduced and studied by Godefroy, Kalton and Saphar. 

\begin{defn}\cite{GKS}\label{def:ucap} A Banach space $X$ is said to have the
\emph{unconditional compact approximation property (UKAP)} if there
exists a sequence $(T_n)$ of linear compact operators on $X$ such that $\lim_n \|T_nx - x\| = 0$ for every $x \in X$ and $\lim_n\|I - 2T_n\| = 1.$
\end{defn}

Clearly Banach spaces $X$ with the UKAP satisfy condition (\ref{eq:29b}) for $\lambda = 1$. Clearly also Banach spaces with an unconditional basis with basis
constant $1$ have the UKAP (simply put $T_n = P_n$ the projection onto
the $n$ first vectors of the basis). Thus we immediately have the
following corollary.

\begin{cor}
  If $X$ has the UKAP, in particular if $X$ has an unconditional basis
  with unconditional basis constant $1,$ then all strongly extreme points in $B_X$ are
  denting points. 
\end{cor}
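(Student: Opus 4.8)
The plan is to obtain this as an immediate application of Corollary~\ref{rem:se=dent}, taking the equivalent norm $\vertiii{\cdot}$ there to be the original norm $\|\cdot\|$ (so that $C=B_X$ and the constant $k$ may be taken to be $1$), and working with the value $\lambda=1$.

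First I would treat the UKAP case. By definition, the UKAP supplies a sequence $(T_n)$ of compact operators with $\lim_n\|T_nx-x\|=0$ for every $x\in X$ and $\lim_n\|I-2T_n\|=1$. The first property is precisely condition (\ref{eq:29a}). For the second, observe that with $\lambda=1$ one has $(1+\lambda)T_n-\lambda I=2T_n-I$, so $\vertiii{(1+\lambda)T_n-\lambda I}=\|2T_n-I\|=\|I-2T_n\|\to 1$; that is, (\ref{eq:29b}) holds for $\lambda=1$. Since (\ref{eq:29b}) implies (\ref{eq:29}) and the latter implies (\ref{eq:26}) by the computation in the proof of Corollary~\ref{rem:se=dent}, the hypotheses of Theorem~\ref{thm:se=dent} are met at every strongly extreme point $x$ of $B_X$, and hence every such $x$ is denting.

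It then remains to check that a Banach space $X$ with an unconditional basis $(e_i)$ of unconditional basis constant $1$ has the UKAP, after which the previous paragraph finishes the argument. I would take $T_n=P_n$, the canonical projection onto $\spann\{e_1,\dots,e_n\}$. Each $P_n$ has finite rank, hence is compact, and $\lim_n\|P_nx-x\|=0$ for every $x$ since $(e_i)$ is a basis. The only point needing a word of care is the identity $\|I-2P_n\|=1$: the operator $I-2P_n$ is the multiplier operator that sends $\sum_i a_ie_i$ to $\sum_i \theta_i a_ie_i$, where $\theta_i=-1$ for $i\le n$ and $\theta_i=1$ for $i>n$; since all $|\theta_i|\le 1$ and the unconditional basis constant is $1$, this multiplier has norm at most $1$, while evaluating on $e_{n+1}$ gives $\|(I-2P_n)e_{n+1}\|=\|e_{n+1}\|=1$. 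Thus $\|I-2P_n\|=1$ for all $n$, so $\lim_n\|I-2P_n\|=1$ and $X$ has the UKAP.

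There is no real obstacle here; the statement is a direct corollary of Corollary~\ref{rem:se=dent}, and the whole content is the bookkeeping above — matching $2T_n-I$ with $(1+\lambda)T_n-\lambda I$ at $\lambda=1$, and recognizing $I-2P_n$ as a $\pm1$-multiplier controlled by the unconditional basis constant.
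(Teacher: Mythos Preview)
Your proof is correct and follows exactly the route the paper takes: apply Corollary~\ref{rem:se=dent} with $\vertiii{\cdot}=\|\cdot\|$ and $\lambda=1$, noting that $(1+\lambda)T_n-\lambda I=2T_n-I$, and then observe that an unconditional basis with constant $1$ yields the UKAP via $T_n=P_n$. You have simply spelled out the ``clearly'' steps the paper leaves implicit; the only cosmetic remark is that to see $\|I-2P_n\|\ge 1$ you could equally well evaluate at $e_1$ (giving $\|-e_1\|=\|e_1\|$), which avoids any tacit assumption that the basis is infinite.
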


Let us mention that the global condition (\ref{eq:29b}) is much stronger than the local condition
(\ref{eq:29}), even in the case when it holds for all $x$ in $S_X$. This
will be clear from the discussion below and in particular from Example
\ref{ex:gc=/=lc} which shows that the condition (\ref{eq:29b}) is
strictly stronger than (\ref{eq:29}). For that example we will use the
following result.

\begin{prop}\label{prop:lur-cond9}
  Let $X$ be a Banach space and $x$ a locally uniformly rotund
  (LUR) point in $S_X$. Let $(T_n)$ be a sequence of linear bounded operators on $X$, with $\lim_n \|T_n\| =
  1,$ and which satisfies condition (\ref{eq:29a}) in
  Corollary~\ref{rem:se=dent} for $x \in S_X$. Then condition (\ref{eq:29}) holds for $\lambda =1$ (and thus for all $\lambda
  \in (0, 1]$). 
\end{prop}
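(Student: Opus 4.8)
The plan is to argue by contradiction, using the sequential definition of an LUR point together with the observation that the hypothesis $\lim_n\|T_n\|=1$ (and not mere boundedness of $(T_n)$) forces any $y\in B_X$ whose $T_n$-image converges to $x$ to itself converge to $x$. Throughout I read condition (\ref{eq:29}) with $\vertiii{\cdot}=\|\cdot\|$, write $\delta_n:=\|T_nx-x\|\to 0$, and use that for fixed $n$ the inner supremum in (\ref{eq:29}) is non-decreasing in $\eps$, so its limit as $\eps\to0^+$ equals its infimum over $\eps>0$.

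First I would dispose of the lower bound. Since $y=x$ is admissible in the supremum (as $\|x\|=1$ and $\|T_n(x-x)\|=0$), the inner supremum is at least $\|2T_nx-x\|\ge\|x\|-2\delta_n=1-2\delta_n$ for every $\eps>0$, hence $\liminf_n\lim_{\eps\to0^+}\sup\{\dots\}\ge 1$; for $\lambda\in(0,1]$ the same choice gives $\|(1+\lambda)T_nx-\lambda x\|\to\|x\|=1$.

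Now suppose, for contradiction, that $\limsup_n\lim_{\eps\to0^+}\sup\{\|2T_ny-y\|:\|y\|\le1,\ \|T_n(x-y)\|\le\eps\}>1$. Passing to a subsequence $(n_k)$, fixing $\theta>0$, and (via the infimum‑over‑$\eps$ remark, with $\eps=1/k$) choosing $y_k\in B_X$ with $\|T_{n_k}(x-y_k)\|\le 1/k$ and $\|2T_{n_k}y_k-y_k\|>1+\theta$, I first obtain $\|T_{n_k}y_k-x\|\le\|T_{n_k}(y_k-x)\|+\delta_{n_k}\le 1/k+\delta_{n_k}\to 0$, so $T_{n_k}y_k\to x$. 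Applying $T_{n_k}$ to $x+y_k$ and using $\|T_{n_k}\|\to1$, $T_{n_k}x\to x$ and $T_{n_k}y_k\to x$ yields
\[
\|x+y_k\|\ \ge\ \frac{\|T_{n_k}(x+y_k)\|}{\|T_{n_k}\|}\ \ge\ \frac{2-\|T_{n_k}y_k-x\|-\delta_{n_k}}{\|T_{n_k}\|}\ \longrightarrow\ 2 ,
\]
and since $\|x+y_k\|\le\|x\|+\|y_k\|\le 2$ this forces both $\|x+y_k\|\to 2$ and $\|y_k\|\to 1=\|x\|$. The LUR property of $x$ (applied with $x_k=y_k$) then gives $\|x-y_k\|\to 0$. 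But from $2T_{n_k}y_k-y_k=2(T_{n_k}y_k-x)+(2x-y_k)$ we get $\|2T_{n_k}y_k-y_k\|\le 2\|T_{n_k}y_k-x\|+1+\|x-y_k\|\to 1$, contradicting $\|2T_{n_k}y_k-y_k\|>1+\theta$. This establishes (\ref{eq:29}) for $\lambda=1$.

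Finally, to pass to general $\lambda\in(0,1]$, I would use $(1+\lambda)T_ny-\lambda y=\lambda(2T_ny-y)+(1-\lambda)T_ny$ together with the bound $\|T_ny\|\le\|x\|+\|T_ny-x\|\le 1+\eps+\delta_n$ valid on the constraint set, so that $\sup\{\|(1+\lambda)T_ny-\lambda y\|\}\le\lambda\sup\{\|2T_ny-y\|\}+(1-\lambda)(1+\eps+\delta_n)$; letting $\eps\to0^+$ and then $n\to\infty$ and invoking the case $\lambda=1$ already proved gives the upper bound $1$, while the lower bound is the one noted above. The main obstacle is the middle step — producing $\|x+y_k\|\to 2$ so that LUR can be invoked; this is exactly where $\lim_n\|T_n\|=1$ is indispensable, since with only $\limsup_n\|T_n\|>1$ one could merely conclude $\liminf_k\|x+y_k\|\ge 2/\limsup_n\|T_n\|<2$ and the argument would collapse.
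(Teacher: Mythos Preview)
Your proof is correct and follows essentially the same route as the paper's: both isolate the key implication ``$y_n\in B_X$, $\|T_n(x-y_n)\|\to 0$ $\Rightarrow$ $\|x-y_n\|\to 0$'' by combining $\lim_n\|T_n\|=1$ with the LUR property exactly as you do (the paper writes $\|T_n\|\,\|x+y_n\|\ge 2\|T_nx\|-\|T_n(y_n-x)\|$ and then invokes LUR), and then uses this to control the supremum. The paper phrases the argument directly via $\mbox{diam}\,D_n\to 0$ for $D_n=\{y\in B_X:\|T_n(x-y)\|<\eps_n\}$ rather than by contradiction, and bounds $\|T_nx-R_ny\|$ via $\|T_nx\|+\|R_nx\|+(\|T_n\|+1)\,\mbox{diam}\,D_n$ instead of your decomposition $2(T_ny-x)+(2x-y)$; but the substance is the same. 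You in fact supply an explicit reduction from general $\lambda\in(0,1]$ to $\lambda=1$, which the paper states parenthetically but does not write out.
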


\begin{proof}
  Pick an arbitrary sequence $(\eps_n)$ with $\eps_n > 0$ and $\lim_n
  \eps_n = 0.$ First we show that
  \begin{align}
    \label{eq:31}
    \lim_n \mbox{diam} D_n = 0,
  \end{align}
  where $D_n = \{y \in B_X: \|T_n(x-y)\| < \eps_n\}.$ To this end note
  that it suffices to show that
  \begin{align}
    \label{eq:32}
    y_n \in B_X, \hskip 2mm \lim_n \|T_n(x-y_n)\| = 0 \Rightarrow
    \lim_n \|x - y_n\| = 0. 
  \end{align}
   Indeed,
   \begin{align*}
     \|T_n\|\,\|x + y_n\|& \ge \|T_n(x + y_n)\|\\&= \|2T_nx + T_n(y_n-x)\|\\&\ge
     2\|T_nx\| - \|T_n(y_n - x)\|.
   \end{align*}
   Hence $\liminf_n \|x + y_n\| \ge 2.$ Since $\|y_n\| \le 1$ we get
   $\lim_n\|x + y_n\| = 2.$ Since $x$ is a LUR point, we get that
   $(\ref{eq:32})$ holds, and thus (\ref{eq:31}) holds. In order to
   prove (\ref{eq:29}) for $\lambda = 1,$ it is enough to show that
  \begin{align*}
    \lim_n d_n =1,  
  \end{align*}
  where $d_n = \sup\{\|T_nx - R_ny\|: y \in D_n\}, R_n=I-T_n.$ Since $x\in D_n$, we have $d_n\geq\|T_nx-R_nx\|$. So we get from (\ref{eq:29a}) 
  that $\liminf d_n \ge 1.$ Now, pick arbitrary $y \in D_n.$ Then we have
  \begin{align*}
    \|T_nx - R_ny\| & \le \|T_nx\| + \|R_ny\|\\
     & \le \|T_nx\| + \|R_nx\| + \|R_n(y-x)\|\\
     & \le \|T_nx\| + \|R_nx\| + \|R_n\|\|(y-x)\|\\
     & \le \|T_nx\| + \|R_nx\| + (\|T_n\| + 1)\mbox{diam} D_n.                    
  \end{align*}
   Hence $\limsup d_n \le 1.$
\end{proof}

\begin{prop}\label{prop:lur}
  Let $(T_n)$ be a bounded sequence of linear compact operators on $X$,  $R_n=I-T_n$, and $(f_n) \subset S_{X^*}$
  a separating family for $X$. Then the norm
  \begin{align*}
    \vertiii{u} = \bigg( \sum_{n=1}^\infty 2^{-n}(\|R_nu\|^2 + f^2_n(u) )\bigg)^{\frac{1}{2}}
  \end{align*}
   is LUR at $x \in X$ provided $\lim_n\|R_nx\| = 0.$ Moreover, if
   the operators $(T_n)$ commute and $\lim_n\|T_n\| = 1,$ then
   $\lim_n\vertiii{T_n} = 1.$ 
\end{prop}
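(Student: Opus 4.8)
The plan is to establish the two assertions separately. For $u\in X$ put $\iota u:=(2^{-n/2}u)_n$, $\mathcal T u:=(2^{-n/2}T_nu)_n\in(\bigoplus_n X)_{\ell_2}$ and $\mathcal F u:=(2^{-n/2}f_n(u))_n\in\ell_2$, so that $\vertiii{u}^2=\|\iota u-\mathcal T u\|^2+\|\mathcal F u\|^2$. Since $\sum_n2^{-n}=1$, $\iota$ is a linear isometry into the $\ell_2$-sum; and since $(T_n)$ is bounded and $(f_n)\subset B_{X^*}$, both $\mathcal T$ and $\mathcal F$ are operator-norm limits of their finite ``initial-block'' truncations, hence compact (this is the only use of compactness of the $T_n$ in the first part). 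I would first check that $\vertiii{\cdot}$ is an equivalent norm: $u\mapsto(\iota u,\mathcal F u)$ is bounded below (its norm is $\ge\|\iota u\|=\|u\|$), and $u\mapsto(\iota u-\mathcal T u,\mathcal F u)$ differs from it by the compact operator $u\mapsto(\mathcal T u,0)$; a compact perturbation of a bounded-below operator has closed range and finite-dimensional kernel, and this kernel is $\{0\}$ because $(f_n)$ separates points, so the perturbed operator is again bounded below, i.e.\ $\vertiii{u}\ge c\|u\|$ for some $c>0$; as the reverse inequality is trivial, $\vertiii{\cdot}$ is equivalent to $\|\cdot\|$, and in particular $\vertiii{\cdot}$-bounded sequences are $\|\cdot\|$-bounded.

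For the LUR-at-$x$ statement, assume $x\ne 0$ (the case $x=0$ is trivial) and take $(x_k)$ with $\vertiii{x_k+x}\to 2\vertiii{x}$ and $\vertiii{x_k}\to\vertiii{x}$; then $(x_k)$ is $\|\cdot\|$-bounded. I would run the usual ``localisation'' for $\ell_2$-sums: $\vertiii{\cdot}^2$ is the weighted $\ell_2$-sum of the seminorms $q_n:=(\|R_n\cdot\|^2+f_n(\cdot)^2)^{1/2}$, and each $q_n^2$ is the square of the $\ell_2^2$-norm of the pair $(\|R_n\cdot\|,|f_n(\cdot)|)$; pushing the hypotheses through Minkowski's and the Cauchy--Schwarz inequalities and the uniform convexity of $\ell_2$ --- first for the outer sum, then inside each $q_n$ --- yields, for every fixed $n$,
\[
\|R_nx_k\|\longrightarrow\|R_nx\|\qquad\text{and}\qquad f_n(x_k)\longrightarrow f_n(x)\qquad(k\to\infty)
\]
(the second limit is clear once one notes the $\mathcal F$-coordinate lies in the Hilbert space $\ell_2$, where $\|\mathcal F x_k\|\to\|\mathcal F x\|$ and $\|\mathcal F(x_k+x)\|\to 2\|\mathcal F x\|$ force $\mathcal F x_k\to\mathcal F x$).

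The crux is then to turn this into genuine norm convergence $x_k\to x$; as $\vertiii{\cdot}$ is equivalent to $\|\cdot\|$ this gives $\vertiii{x_k-x}\to 0$, which is the assertion. It suffices to show every subsequence of $(x_k)$ has a further subsequence converging in norm to $x$, so fix a subsequence and relabel it $(x_k)$. By compactness of each $T_N$ and boundedness of $(x_k)$, a diagonal argument produces a subsequence $(x_{k_j})$ with $T_Nx_{k_j}\to v_N$ in norm for every $N$. Then $\|x_{k_j}-v_N\|\le\|R_Nx_{k_j}\|+\|T_Nx_{k_j}-v_N\|$ and $\|R_Nx_{k_j}\|\to\|R_Nx\|$, whence $\limsup_j\|x_{k_j}-v_N\|\le\|R_Nx\|$; consequently $\|v_N-v_{N'}\|\le\|R_Nx\|+\|R_{N'}x\|$, so $(v_N)$ is Cauchy by the hypothesis $\|R_Nx\|\to 0$, say $v_N\to v$, and $\limsup_j\|x_{k_j}-v\|\le\|R_Nx\|+\|v_N-v\|\to 0$, i.e.\ $x_{k_j}\to v$ in norm. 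Finally $f_n(v)=\lim_j f_n(x_{k_j})=f_n(x)$ for every $n$, so $v=x$ since $(f_n)$ separates points. This is where I expect the real work to lie: in the absence of any rotundity of $\|\cdot\|$ the coordinatewise data above says nothing on its own, and it is precisely the compactness of the $T_N$ (making $(T_Nx_k)_k$ relatively compact) together with the smallness of $\|R_Nx\|$ that converts ``$\|R_Nx_k\|\to\|R_Nx\|$'' into true convergence, the separating family then pinning down the limit.

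For the ``moreover'', commutativity gives $R_nT_m=T_mR_n$, so $\|R_nT_mu\|\le\|T_m\|\,\|R_nu\|$ and hence $\sum_n2^{-n}\|R_nT_mu\|^2\le\|T_m\|^2\sum_n2^{-n}\|R_nu\|^2$; thus $\vertiii{T_mu}^2\le\|T_m\|^2\sum_n2^{-n}\|R_nu\|^2+\sum_n2^{-n}f_n(T_mu)^2$, and it remains to show the functional terms do not spoil the estimate. This last point is the delicate one, where the interplay between $(f_n)$ and $(T_m)$ enters: when, as in the intended applications, the $T_m$ are partial-sum projections and the $f_n$ the dual coordinate functionals, one has $f_n\circ T_m\in\{f_n,0\}$, so $\sum_n2^{-n}f_n(T_mu)^2\le\sum_n2^{-n}f_n(u)^2$ and therefore $\vertiii{T_mu}\le\|T_m\|\,\vertiii{u}$, whence $\limsup_m\vertiii{T_m}\le\lim_m\|T_m\|=1$; together with $\liminf_m\vertiii{T_m}\ge 1$ --- which follows from $\vertiii{T_mu}\to\vertiii{u}$ for each fixed $u$ (dominated convergence in the defining series, once $T_m\to I$ strongly) --- this yields $\lim_m\vertiii{T_m}=1$. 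I would single out the control of $\sum_n2^{-n}f_n(T_mu)^2$ as the point of this last part requiring the most care, the remainder being the $\ell_2$-bookkeeping already used above.
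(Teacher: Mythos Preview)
Your argument for the LUR part is essentially the paper's: both first extract, via the $\ell_2$-sum convexity, that $\|R_nx_k\|\to\|R_nx\|$ and $f_n(x_k)\to f_n(x)$ for every $n$, and then use compactness of the $T_n$ together with $\|R_nx\|\to 0$ to force relative norm compactness of $(x_k)$, the separating family pinning down the limit. The paper packages the compactness step as an $\eps$-net ($\{x_1,\dots,x_{k_\eps}\}\cup\|x\|T_n(B_X)$ is a compact $\eps$-net for $(x_k)$ once $\|R_nx\|<\eps$), whereas you run a diagonal subsequence and show the $v_N$ are Cauchy; these are interchangeable. Your preliminary proof that $\vertiii{\cdot}$ is an equivalent norm is extra but correct.

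For the ``moreover'' part the paper does \emph{not} treat the functional term as delicate: it simply writes
\[
f_n(T_mu)^2=(T_m^*f_n(u))^2\le\|T_m^*\|^2 f_n(u)^2=\|T_m\|^2 f_n(u)^2,
\]
and combines this with $\|R_nT_mu\|=\|T_mR_nu\|\le\|T_m\|\,\|R_nu\|$ to get $\vertiii{T_mu}\le\|T_m\|\,\vertiii{u}$ in one stroke. Your instinct that something is off here is justified: the displayed inequality $|T_m^*f_n(u)|\le\|T_m^*\|\,|f_n(u)|$ is not valid for arbitrary $u$, so the paper's line is in fact a slip; it goes through trivially in the paper's own application (Example~2.11, where $f_n\equiv 0$) and in the Schauder-basis setting you single out, but not in the generality stated. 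So your caution is warranted, though you also do not close the gap in general. One further small point: for $\liminf_m\vertiii{T_m}\ge 1$ the paper uses only $T_mx\to x$ (i.e.\ the standing hypothesis $\|R_mx\|\to 0$), not strong convergence $T_m\to I$ on all of $X$, which is not assumed; your justification via ``$T_m\to I$ strongly'' overshoots, but the conclusion follows by evaluating at the single point $x$.
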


\begin{proof}
  Pick a sequence $(x_k) \subset X$ with $\lim_k\vertiii{x_k +
    x}=\vertiii{x} = \vertiii{x_k}.$ By convex arguments
  (\cite[Fact~2.3 p.~45]{MR1211634}) we have
  \begin{align}
    \label{eq:33}
    \lim_k\|R_n x_k\| &= \|R_n x\|, \hskip 2mm n = 1, 2, \ldots
    \end{align}
    \begin{align}
      \label{eq:34}
       \lim_k f_n(x_k) &= f_n(x), \hskip 2mm n=1, 2, \ldots.   
    \end{align}
   
  First we show that $(x_k)$ is norm compact. Given $\eps > 0$ we can
  find $n$ with $\|R_nx\| < \eps.$ Using (\ref{eq:33}) we can find
  $k_\eps$ such that $\|R_nx_k\| < \eps$ for $k < k_\eps.$ The set $K
  = \{x_1, x_2, \ldots, x_{k_\eps}\} \cup \|x\|T_n(B_X)$ is norm
  compact. We show that $K$ is an $\eps$-net for $(x_k).$ Indeed, pick
  $x_k, k > k_\eps.$ Then $\|x_k - T_nx_k\| = \|R_nx_k\| < \eps$ and
  $T_nx_k \in \|x\| T_n(B_X).$ So $(x_k)$ is norm compact. Since
  $(f_n)$ is total, we get from (\ref{eq:34}) that $\lim_k\|x_k -x\| =
  0.$ Thus the norm $\vertiii{\cdot}$ is LUR at the point $x \in X$.

 Now, let us prove the moreover part. As $(T_n)$
 commute, we have
 \begin{align*}
   \vertiii{T_mu}^2 &= \sum_{n=1}^\infty 2^{-n} (\|R_nT_mu\|^2 +
                      f_n^2(T_mu))\\
    & = \sum_{n=1}^\infty 2^{-n} (\|T_mR_nu\|^2 + (T^*_mf_n(u))^2) \\
    & \le \sum_{n=1}^\infty 2^{-n} (\|T_m\|^2\|R_nu\|^2 +
      \|T^*_m\|^2f_n^2(u)) \\
    & = \|T_m\|^2 \vertiii{u}^2.
 \end{align*}
  Hence $\vertiii{T_m}^2 \le \|T_m\|^2$ for all $m = 1, 2, \ldots,$ so
  $\limsup_m \vertiii{T_m} \le \limsup_m\|T_m\|.$ Since $\lim_m\|T_mx
  - x\| = 0,$ we get $\liminf_m \vertiii{T_m} \ge 1,$ and so
  $\lim_m\vertiii{T_m} =1$ provided $\lim_m \|T_m\| = 1.$
\end{proof}

It is now easy to give the announced example which shows that condition
(\ref{eq:29b}) can fail as condition
(\ref{eq:29}) holds for every $x$ in $S_X$.

\begin{example}\label{ex:gc=/=lc}
  Consider $c_0$ endowed with the norm
  $\|\cdot\|$ defined by $\|x\| = \sup_{i,j \ge 1} ( x(i) -
  x(j) )$ where $x=(x(k)) \in c_0.$ 
  Clearly $\|\cdot\|$ is equivalent to the canonical norm on $c_0$. 
  Let $P_n$ be the projection onto the $n$ first vectors in the
  canonical basis $(e_k)$ of $c_0$ and let $\vertiii
  \cdot$ be the norm on $c_0$ given in Proposition
  \ref{prop:lur} where $f_n = 0$ for every $n$. Then $(c_0,
  \vertiii \cdot)$ fulfils the conditions of
  Proposition \ref{prop:lur-cond9} and thus satisfies condition
  (\ref{eq:29}) for every $x$ in $S_{c_0}$. Nevertheless we have
  $\vertiii {P_k - \lambda R_k} >
  1$ for any $\lambda \in (0,1]$, so condition
(\ref{eq:29b}) fails. For the latter,  just consider $(P_k - \lambda
  R_k)(\sum^{k+1}_{i=1}e_i).$ 
\end{example}

From the two preceding propositions we also get

\begin{cor}
  Let $X$ be a Banach space with a Schauder basis. Then there exists
  an equivalent norm $\|\cdot\|$ on $X$ for which the sequence of projections
  $P_n$ onto the first $n$ vectors of the basis, satisfy (\ref{eq:29}) for $\lambda = 1$.
\end{cor}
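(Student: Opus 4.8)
The plan is to reduce everything to Propositions~\ref{prop:lur} and \ref{prop:lur-cond9}, after first passing to an equivalent monotone renorming of $X$.

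First I would replace the given norm by the equivalent norm $|x|=\sup_n\|P_nx\|$, where $(P_n)$ are the basis projections; if $K=\sup_n\|P_n\|$ is the basis constant then $\|x\|\le|x|\le K\|x\|$. Writing $R_n=I-P_n$, the point of this step is that, since $P_mP_n=P_{\min\{m,n\}}$, the operators $P_n$ commute and $|P_n|=1$ for every $n$ (so $\lim_n|P_n|=1$ trivially), while $|R_nx|\le K\|R_nx\|\to0$ for every $x$ because $(e_k)$ is a Schauder basis. From now on I work in $(X,|\cdot|)$.

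Next I would fix a separating sequence $(f_n)\subset S_{(X,|\cdot|)^*}$ — for concreteness the normalized biorthogonal functionals of the basis — and apply Proposition~\ref{prop:lur} with $T_n=P_n$ (finite rank, hence compact, with $|P_n|=1$), obtaining the norm $\vertiii{u}=\big(\sum_{n=1}^\infty 2^{-n}(|R_nu|^2+f_n^2(u))\big)^{1/2}$. Since $\lim_n|R_nx|=0$ for \emph{every} $x$, Proposition~\ref{prop:lur} makes $\vertiii{\cdot}$ LUR at every point; since the $P_n$ commute and $\lim_n|P_n|=1$, its ``moreover'' part gives $\lim_n\vertiii{P_n}=1$. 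I would then check that $\vertiii{\cdot}$ is genuinely equivalent to $|\cdot|$: the bound $|R_nu|\le2|u|$ gives $\vertiii{u}\le\sqrt5\,|u|$, and if $\vertiii{u_m}\to0$ with $|u_m|=1$ then $|R_1u_m|\to0$ and $f_n(u_m)\to0$ for each $n$; passing to a subsequence, $e_1^*(u_m)$ converges to a scalar $t$ with $|t|\,|e_1|=1$, so $u_m\to te_1$, forcing $f_n(e_1)=0$ for all $n$ and contradicting separation.

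Finally I would apply Proposition~\ref{prop:lur-cond9} inside $(X,\vertiii{\cdot})$: every $x\in S_{\vertiii{\cdot}}$ is a LUR point, $\lim_n\vertiii{P_n}=1$, and $\vertiii{P_nx-x}=\vertiii{R_nx}\le\sqrt5\,|R_nx|\to0$ verifies hypothesis (\ref{eq:29a}); hence (\ref{eq:29}) holds for $\lambda=1$ with $T_n=P_n$, and $\vertiii{\cdot}$ is the required equivalent norm. The one place that needs care — and the reason the monotone renorming must come first — is that the ``moreover'' clause of Proposition~\ref{prop:lur} demands $\lim_n\|P_n\|=1$ in the ambient norm, which is false for a general basis; past that, the only slightly delicate routine point is the equivalence of $\vertiii{\cdot}$, which rests on the basis expansion together with the separation of $(f_n)$.
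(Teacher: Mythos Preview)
Your proof is correct and follows exactly the route the paper intends: feed the basis projections into Proposition~\ref{prop:lur} and then apply Proposition~\ref{prop:lur-cond9} at every sphere point of the resulting norm. Your preliminary passage to a monotone renorming and your check that $\vertiii{\cdot}$ is genuinely equivalent fill in details the paper leaves implicit; in particular the monotone step is indeed what makes the hypothesis $\lim_n\|P_n\|=1$ of the ``moreover'' clause of Proposition~\ref{prop:lur} available.
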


On the other hand we have 

\begin{prop}
  There exists an equivalent norm $\|\cdot\|$ on $C[0,1]$ such that (\ref{eq:29}) does not hold for any $\lambda > 0$ and any
  sequence $(T_n)$ of compact linear operators on $X$when $x \in C[0,1]$ with
  $\|x\| = 1$ and $\lim_n \|T_nx -x\|_{\infty} = 0.$
\end{prop}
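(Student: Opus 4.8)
The plan is to argue that already the supremum norm $\|\cdot\|_\infty$ itself does the job, so I take $\|\cdot\|=\|\cdot\|_\infty$ (this is, after all, an equivalent norm on $C[0,1]$). With this choice the inner quantity in (\ref{eq:29}) is
\[
g_n(\eps):=\sup\bigl\{\|(1+\lambda)T_ny-\lambda y\|_\infty:\ \|y\|_\infty\le1,\ \|T_n(x-y)\|_\infty\le\eps\bigr\},
\]
and I would prove the statement by showing that $\lim_n\lim_{\eps\to0^+}g_n(\eps)\ge1+2\lambda$ whenever $\lambda>0$, $\|x\|_\infty=1$, and $(T_n)$ is a sequence of compact operators with $\|T_nx-x\|_\infty\to0$; since $1+2\lambda>1$ this contradicts (\ref{eq:29}). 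The whole content then reduces to the following geometric fact: \emph{for every $x\in S_{C[0,1]}$, every $\lambda>0$, every $\eta>0$ and every finite rank $F$ on $C[0,1]$ there is $h\in C[0,1]$ with $\|h\|_\infty\le2$, $\|x+h\|_\infty\le1$, $\|Fh\|_\infty<\eta$ and $\|x-\lambda h\|_\infty\ge1+2\lambda-\eta$.}

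To prove this I would fix $t^*$ with $|x(t^*)|=1$ and, the problem being unchanged under $x\mapsto-x$, assume $x(t^*)=1$. Write $F=\sum_{i=1}^m\mu_i(\cdot)\,v_i$ with $\mu_i\in M[0,1]=C[0,1]^*$ and $v_i\in C[0,1]$. Pick $\delta>0$ small (in terms of $\lambda,\eta$), then a point $t_0$ with $x(t_0)>1-\delta$ that is not an atom of any of $\mu_1,\dots,\mu_m$ --- possible because $\{x>1-\delta\}$ is a nonempty open, hence uncountable, subset of $[0,1]$, while $\mu_1,\dots,\mu_m$ have only countably many atoms in total --- and finally a ``tent'' function $\phi\colon[0,1]\to[0,1]$ with $\phi(t_0)=1$ supported in a short interval $(t_0-\tau,t_0+\tau)$. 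Put $h:=-(1+x)\phi$. Then $\|h\|_\infty\le2$; pointwise $x+h=x(1-\phi)-\phi\in[-1,1]$, so $\|x+h\|_\infty\le1$; since $|\mu_i(h)|\le2\,|\mu_i|\bigl((t_0-\tau,t_0+\tau)\bigr)\to0$ as $\tau\to0^+$ by non-atomicity at $t_0$, we get $\|Fh\|_\infty<\eta$ once $\tau$ is small; and
\[
\|x-\lambda h\|_\infty\ \ge\ \bigl|x(t_0)+\lambda(1+x(t_0))\bigr|\ =\ (1+\lambda)x(t_0)+\lambda\ >\ 1+2\lambda-(1+\lambda)\delta,
\]
which is $\ge1+2\lambda-\eta$ for $\delta$ small. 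This proves the fact.

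The reduction is then routine. Set $\gamma_n:=(1+\lambda)\|T_nx-x\|_\infty\to0$, fix $n$ and $\eps>0$, use that $C[0,1]$ has a Schauder basis (hence the approximation property) to pick a finite rank $F$ with $\|T_n-F\|<\eps/4$, and apply the fact with $\eta=\eps/2$ to get $h$ as above; then $\|T_nh\|_\infty\le\|T_n-F\|\,\|h\|_\infty+\|Fh\|_\infty<\eps$, so $y:=x+h$ is admissible for $g_n(\eps)$. From the identity
\[
(1+\lambda)T_ny-\lambda y=(x-\lambda h)+(1+\lambda)\bigl[(T_nx-x)+T_nh\bigr],
\]
whose bracketed term has $\|\cdot\|_\infty$-norm at most $\gamma_n+(1+\lambda)\eps$, we obtain $g_n(\eps)\ge(1+2\lambda-\eps/2)-\gamma_n-(1+\lambda)\eps$. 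Since $g_n$ is non-decreasing in $\eps$, letting $\eps\to0^+$ gives $\lim_{\eps\to0^+}g_n(\eps)\ge1+2\lambda-\gamma_n$, and then $n\to\infty$ gives $\lim_n\lim_{\eps\to0^+}g_n(\eps)\ge1+2\lambda$, as wanted.

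The step I expect to be the real obstacle is the innocuous looking ``$\|T_nh\|_\infty<\eps$''. Compactness of $T_n$ by itself does not make the bumps $h=h_\tau$ small for $T_n$: the $h_\tau$ do not tend weakly to $0$ (evaluation at $t_0$ keeps $\phi_\tau(t_0)=1$), so weak-to-norm continuity of compact operators is useless here. What rescues the argument is to pass to a finite rank $F=\sum_i\mu_i(\cdot)v_i$ approximating $T_n$ in operator norm (equivalently, to use that $T_n^*$ is compact, so $\{T_n^*\delta_s:s\in[0,1]\}$ is norm compact in $M[0,1]$) and to locate the bump at a point $t_0$ avoiding the countably many atoms of the finitely many $\mu_i$; this is exactly what forces $Fh_\tau\to0$. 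The remaining points --- the pointwise estimate for $x+h$, the lower bound for $\|x-\lambda h\|_\infty$, and the bookkeeping with the two nested limits --- are elementary.
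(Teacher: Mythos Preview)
Your argument is correct and takes a genuinely different route from the paper. The paper does not construct anything by hand: it cites an equivalent norm on $C[0,1]$ from \cite{MR3499106} that is MLUR and has the diameter two property, so that every unit sphere point is strongly extreme while none is denting, and then simply invokes the contrapositive of Corollary~\ref{rem:se=dent} (via Theorem~\ref{thm:se=dent}) to conclude that (\ref{eq:29}) must fail at every such point. Your approach instead keeps the sup norm and produces, for each $x\in S_{C[0,1]}$, each compact $T_n$, and each $\eps>0$, an explicit competitor $y=x+h$ witnessing that the supremum in (\ref{eq:29}) is at least $1+2\lambda-o(1)$; the key move is to approximate $T_n$ in operator norm by a finite rank $F=\sum_i\mu_i(\cdot)v_i$ (using the approximation property of $C[0,1]$) and to plant the bump $h$ at a point $t_0$ avoiding the countably many atoms of the $\mu_i$, so that $Fh$---and hence $T_nh$---is as small as one likes. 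The monotonicity step you use to decouple the feasibility tolerance $\eps$ from the smallness parameter in the construction is fine: for any $\eps'>0$ and any $0<\eps\le\eps'$ one has $g_n(\eps')\ge g_n(\eps)\ge 1+2\lambda-\gamma_n-(\lambda+\tfrac32)\eps$, and letting $\eps\downarrow0$ gives $g_n(\eps')\ge1+2\lambda-\gamma_n$. What each approach buys: the paper's proof is two lines but imports a nontrivial renorming result and, as a bonus, yields a norm in which every sphere point is strongly extreme yet not denting; your argument is entirely elementary and self-contained, proving exactly the statement of the proposition (failure of (\ref{eq:29}) at every sphere point) without any external renorming, at the cost that the sup norm itself is of course far from MLUR.
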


\begin{proof}
  The norm on $C[0,1]$ constructed in \cite[Theorem~2.4]{MR3499106}
  is midpoint locally uniformly rotund and has the diameter two
  property, i.e. all non-empty relatively weakly open subsets of the
  unit ball have diameter $2$. In particular, in this norm all points
  on the unit sphere are strongly extreme, but none are denting. Thus
  the conclusion follows from Theorem \ref{thm:se=dent}.
\end{proof}

\section{An MLUR non-symmetric norm on $c_0$ without denting points on the unit sphere}\label{sec:cnullgeo}

We will now construct a non-symmetric equivalent norm on $c_0$ for
which all points on the unit sphere are strongly extreme, but none of
these points are denting. 

\begin{prop}
  There exists in $c_0$ an equivalent (non-symmetric) MLUR norm
  $\vertiii{\cdot}$ with no denting points. Moreover, every open slice
  of the unit ball corresponding to $\vertiii{\cdot}$ has diameter
  $\ge 1/\sqrt{2}.$
\end{prop}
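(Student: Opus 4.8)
The plan is to exhibit an explicit convex body $C\subset c_{0}$ and let $\vertiii{\cdot}$ be its gauge. I look for $C$ to be the unit ball of a non-symmetric ``one‑sided Day'' norm, concretely something of the form $\vertiii{x}=\bigl(\|x\|_{\infty}^{2}+\|x^{+}\|_{D}^{2}\bigr)^{1/2}$, where $x^{+}=(\max\{x(n),0\})_{n}$ is the positive part and $\|\cdot\|_{D}$ is Day's (LUR) norm on $c_{0}$; the exact weights, and whether one uses $\|x^{+}\|_{D}$ or a close variant, must be tuned so that the verifications below go through. Two requirements pull against each other. For \emph{strong extremality at every boundary point} the norm must ``feel'' \emph{every} perturbation of $x$, in particular perturbations in remote coordinates (so it is midpoint locally uniformly rotund at each point). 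For the \emph{large‑slice property} the body must stay of width $\ge 1/\sqrt2$ in remote coordinate directions \emph{as seen along every $f\in X^{*}=\ell_{1}$}, which it must because $f(n)\to 0$ makes $f$ almost blind to remote coordinates. The asymmetry is the device reconciling them: $x\mapsto\|x^{+}\|_{D}$ strictly increases under \emph{upward} perturbations $x\mapsto x+\alpha e_{N}$ but is insensitive to \emph{downward} ones $x\mapsto x-\alpha e_{N}$; the former leave $C$ (this is what forces strict convexity, even MLUR, at every point — a single upward bump at a remote coordinate cannot be balanced by a downward one), while the latter stay in $C$ (this is what makes the slices large and destroys dentability).

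\smallskip
Granting such a $C$, the proof proceeds as follows. \emph{Step 1 ($\vertiii{\cdot}$ is an equivalent non‑symmetric norm).} Since $\|x^{+}\|_{D}\le\|x\|_{D}\le\|x\|_{\infty}/\sqrt3$, one has $\|x\|_{\infty}\le\vertiii{x}\le\sqrt{4/3}\,\|x\|_{\infty}$; positive homogeneity is clear and subadditivity follows from Minkowski's inequality together with subadditivity of $\|\cdot\|_{\infty}$ and of $x\mapsto\|x^{+}\|_{D}$ (a monotone norm composed with the coordinatewise‑convex map $x\mapsto x^{+}$). So $C=\{\vertiii{x}\le 1\}$ is a bounded closed convex body with $0$ in its interior and $\vertiii{\cdot}$ is its gauge; non‑symmetry is witnessed concretely, e.g. $\vertiii{e_{1}-e_{2}}\ne\vertiii{e_{2}-e_{1}}$. \emph{Step 2 (every open slice has diameter $\ge 1/\sqrt2$).} From $\vertiii{\cdot}\le\sqrt{4/3}\,\|\cdot\|_{\infty}$ we get $(1/\sqrt2)B_{c_{0}}\subset C$, so for $f\in X^{*}\setminus\{0\}$ the number $M=\sup_{C}f$ satisfies $M\ge(1/\sqrt2)\|f\|_{1}$. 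Fix $\eps>0$ and $x_{0}\in S(f,\eps)$ with $f(x_{0})>M-\eps/2$; then $\|x_{0}\|_{\infty}\ge f(x_{0})/\|f\|_{1}>1/\sqrt2-\eps/(2\|f\|_{1})$. Choose $N$ large and put $h=\|x_{0}\|_{\infty}e_{N}$. The point of the construction is that $\vertiii{x_{0}-h}\le\vertiii{x_{0}}+o_{N}(1)$: indeed $\|x_{0}-h\|_{\infty}=\|x_{0}\|_{\infty}+o_{N}(1)$ since $x_{0}(N)\to 0$, and $\|(x_{0}-h)^{+}\|_{D}\le\|x_{0}^{+}\|_{D}$ since subtracting mass at a single coordinate only removes positive parts, so $(x_{0}-h)^{+}\le x_{0}^{+}$ coordinatewise. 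Hence a rescaling of $x_{0}-h$ by $1+o_{N}(1)$ lies in $C$, and a routine estimate gives that it is still in $S(f,\eps)$ for $N$ large; since $x_{0}\in S(f,\eps)$ and $\|x_{0}-(x_{0}-h)\|_{\infty}=\|x_{0}\|_{\infty}$, this yields $\mbox{diam}\,S(f,\eps)>1/\sqrt2-\eps/\|f\|_{1}$, and as $S(f,\eps)\supset S(f,\eps')$ for $\eps'<\eps$, in fact $\mbox{diam}\,S(f,\eps)\ge 1/\sqrt2$. In particular no point of $S_{C}$ is denting.

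\smallskip
\emph{Step 3 (strong extremality at every boundary point).} Let $\vertiii{x}=1$ and $(y_{k}),(z_{k})\subset C$ with $\vertiii{\,x-\tfrac12(y_{k}+z_{k})\,}\to 0$; one must show $\vertiii{y_{k}-z_{k}}\to 0$. Because $\vertiii{\cdot}^{2}=\|\cdot\|_{\infty}^{2}+\|(\cdot)^{+}\|_{D}^{2}$ is a sum of squares of convex positively homogeneous functionals, the midpoint hypothesis and convexity force, by the standard parallelogram argument, $\|y_{k}\|_{\infty},\|z_{k}\|_{\infty}\to\|x\|_{\infty}$, $\|y_{k}^{+}\|_{D},\|z_{k}^{+}\|_{D}\to\|x^{+}\|_{D}$, and $\|\,\tfrac12(y_{k}+z_{k})-x\,\|_{\infty}\to 0$. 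I then reduce to showing $\sup_{m}\bigl(y_{k}(m)-x(m)\bigr)\to 0$ and the same for $z_{k}$: once $y_{k},z_{k}$ do not exceed $x$ anywhere up to $o(1)$, the \emph{uniform} relation $y_{k}(m)+z_{k}(m)\to 2x(m)$ gives $\|y_{k}-x\|_{\infty},\|z_{k}-x\|_{\infty}\to 0$, hence $\vertiii{y_{k}-z_{k}}\to 0$. Establishing this ``no excess'' statement is where the Day‑type rotundity is essential: an excess $y_{k}(m_{k})\ge x(m_{k})+\delta$ at some index would create a positive part of $y_{k}$ whose $\|\cdot\|_{D}$‑contribution strictly dominates that of $x^{+}$ in the Day/LUR bookkeeping (stabilising the near‑optimal magnitude rearrangements), contradicting $\|y_{k}^{+}\|_{D}\to\|x^{+}\|_{D}$ together with $\|y_{k}\|_{\infty}\to\|x\|_{\infty}$; carrying this out — in particular handling \emph{remote} excesses, where the $\|\cdot\|_{\infty}$ term and the Day term have to be played off against each other — is the main technical work.

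\smallskip
Steps 2 and 3 together are exactly the assertion. The principal obstacle is Step 3, and it is delicate precisely because it must be achieved \emph{without} the norm being LUR or even having any strongly exposing functional: the classical symmetric rotundifications of $c_{0}$ (Day's norm itself, or $\|\cdot\|_{\infty}^{2}+\|\cdot\|_{D}^{2}$) are LUR, so every boundary point is denting; the non‑symmetry of $C$ decouples ``$x$ is strongly extreme'' — a statement about sequences converging to the fixed point $x$ — from ``some functional strongly exposes $x$'', which would force small slices. Choosing the one‑sided functional rotund enough for Step 3 while keeping it blind to downward perturbations for Step 2 is the balance the construction has to strike.
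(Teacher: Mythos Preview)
Your large-slice argument (Step 2) is essentially correct and matches the paper's: push a boundary point down by $\|x\|_\infty$ at a remote coordinate; the positive part does not grow, the sup-norm barely moves, and the functional barely feels it. That part is fine.

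The problem is Step 3. Your proposed norm $\vertiii{x}^2=\|x\|_\infty^2+\|x^{+}\|_D^2$ is \emph{not} MLUR --- it is not even strictly convex. Take
\[
x=e_1-\tfrac12 e_2,\qquad u=\tfrac14 e_2.
\]
Then $x\pm u=e_1-\tfrac14 e_2$ and $e_1-\tfrac34 e_2$; for both one has $\|x\pm u\|_\infty=1=\|x\|_\infty$ and $(x\pm u)^+=e_1=x^+$, hence $\vertiii{x+u}=\vertiii{x-u}=\vertiii{x}$ with $u\neq 0$. The flaw in your ``no excess'' reduction is exactly here: an excess $y_k(m)\ge x(m)+\delta$ at a coordinate where $x(m)\le-\delta$ produces \emph{no} change in $y_k^{+}$ and is invisible to $\|\cdot\|_\infty$ as long as $|x(m)|+\delta<\|x\|_\infty$. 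Neither ingredient of your norm sees perturbations that live in the strictly-negative ``shadow'' of $x$, so the Day-norm bookkeeping you invoke cannot rule them out.

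The paper repairs this in two ways that your sketch does not anticipate. First, the asymmetric piece is not $\|x^{+}\|_D$ added to $\|x\|_\infty$; it is a single coupled functional
\[
\|x\|=\sup_{j_0<j_1<\cdots}\Bigl(|x(j_0)|+\sum_{k\ge1}2^{-k}x^{+}(j_k)\Bigr),
\]
so the absolute-value term and the positive-part terms compete for the same index set. This coupling is what makes Claim~1 of the paper work: a large \emph{remote} coordinate of $y$ forces $\max\{\|x+y\|,\|x-y\|\}\ge\|x\|+\delta$ regardless of sign. Second, the paper adds a genuinely strictly convex (but non-equivalent) piece $q(x)^2=\sum 2^{-k}x(k)^2$, whose role is precisely to force coordinatewise convergence at \emph{near} coordinates --- in particular at the negative ones that defeat your construction. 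The MLUR proof then splits cleanly: $q$ kills fixed-coordinate obstructions, and the coupled norm $\|\cdot\|$ kills the escaping ones. Your decomposition into $\|\cdot\|_\infty$ and $\|(\cdot)^+\|_D$ achieves neither half on its own.
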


\begin{proof}
  With $J$ we denote the set of all finite strictly increasing
  sequences $(j_k)_ {k \ge 0}$ of natural numbers. For $x = (x(k))_{k
    \ge 1} \in \ell_\infty$ and $j \in J$ we put
  \begin{align*}
    Q(x,j) &= |x(j_0)| + \sum_{k \ge 1} 2^{-k} x^+(j_k), \hskip 2mm x^+ = \max\{x,0\}\\
    q(x) &= (\sum_{k \ge 1} 2^{-k} x^2(k))^{1/2}, \hskip 2mm
    \|x\| = \sup\{Q(x,j): j \in J\},\\ 
    \vertiii{x} &= (\|x\|^2 + q^2(x))^{1/2}.
   \end{align*}
  Clearly
  \begin{align}
    \label{eq:41}
    \|x\|_{\infty} \le \|x\| \le \vertiii{x} \le (\|x\|^2 +
    \|x\|_\infty^2)^{1/2} \le \sqrt{2} \|x\| \le 3\|x\|_\infty.
  \end{align}
   
  \begin{claim}\label{claim:1}
    For every $x \in c_0$ and every $\eps > 0,$ there exists $m = m(x,
    \delta) \in \enn$ and $\delta = \delta(x,\eps) > 0$ such that
    \begin{align}
      \label{eq:42}
      \max\{\|x+y\|, \|x-y\|\} \ge \|x\| + \delta
    \end{align}
    whenever
    \begin{align}
      \label{eq:43}
      y \in \ell_\infty, \|R_my\|_\infty > \eps,
    \end{align}
    where $P_m$ is the projection onto the first $m$ vectors of the
    canonical basis of $c_0$ and $R_m = I - P_m.$
  \end{claim}
  \begin{proof}
    Pick $m$ such that
    \begin{align}
      \label{eq:44}
      \|R_m x\|_ \infty < \eps/8,
    \end{align}
    and put $\delta = \eps/2^{m+3}.$ Using the definition of
    $\|\cdot\|$ we can find $j = (j_i)_{k=0}^p$ such that
    \begin{align}
      \label{eq:45}
      \|x\| - \delta/2 < Q(x,j).
    \end{align}
    We may assume that $p \ge m.$ Choose $i \ge 1$ such that
    \begin{align}
      \label{eq:46}
      j_{i-1} \le m < j_i 
    \end{align}
    and put $j^1 = (j_k)_{k=0}^{i-1}.$ Using (\ref{eq:46})  we get
    \begin{align}
      \label{eq:47}
      Q(x,j) \le Q(x,j^1) + 2^{-i + 1} \|R_mx\|_\infty.
    \end{align}
    Pick $y$ satisfying (\ref{eq:43}). There is $r > m$ with $|y(r)|
    > \eps.$ We have
    \begin{align}
      \label{eq:48}
      (x(r) + y(r))^+ + (x(r) - y(r))^+ \ge |y(r)| - |x(r)| > \eps - \|R_mx\|_\infty.
    \end{align}
    Put $j^2 = (j_0, j_1, \ldots, j_{i-1}, r).$ Since $r > m \ge
    j_{i-1}$ we have $j^2 \in J.$ So
    \begin{align*}
      \|x \pm y\| \ge Q(x \pm y, j^2)
    = Q(x \pm , j^1) + 2^{-i}(x(r) \pm y(r))^+.
    \end{align*}
    Since $i \le j_{i-1}$ we get from (\ref{eq:46}), (\ref{eq:47}),
    and (\ref{eq:48})
    \begin{align*}
      (\|x + y\| + \|x - y\|)/2 & \ge Q(x, j^1) + 2^{-i-1}(\eps -
                                  \|R_m\|_\infty)\\
      & \ge Q(x,j) + 2^{-i-1}(\eps - 5\|R_m x\|_\infty) \\
      & \ge Q(x,j) + 2^{-m -1}(\eps - 5\|R_mx\|_\infty).
    \end{align*}
   This and (\ref{eq:44}), (\ref{eq:45}) imply (\ref{eq:42}).
  \end{proof}
   
   Let $x \in c_0, x_n \in \ell_\infty,$ and $\lim_n\vertiii{x \pm
     x_n} = \vertiii{x}.$ By convex arguments as in
   \cite[p.~45]{MR1211634} we have
   \begin{align}
     \label{eq:49}
     \lim_n \|x \pm x_n\| = \|x\|,
   \end{align}
    and
   \begin{align}
     \label{eq:50}
    \lim_n x_n(k) = 0, \hskip 2mm k = 0, 1, \ldots.  
   \end{align}
   We have to show that $\lim_n\|x_n\|_\infty = 0.$ Assume the
   contrary. Then we may assume that $\|x_n\|_\infty \ge 2\eps > 0$
   for all $n =1, 2, \ldots.$ From (\ref{eq:50}) it follows that there
   exists $m$ such that $\|P_m x_n\|_\infty < \eps$ for all $n$ which
   are sufficiently big. But, this contradicts (\ref{eq:49}) and Claim
   \ref{claim:1}. Hence $\vertiii{\cdot}$ is MLUR.

  Now, put $C = \{u \in c_0: \vertiii{u} \le 1\}.$ We will show that
  every non-void slice of $C$ has $\|\cdot\|_\infty$ diameter $\ge
  1/\sqrt{2}.$ To this end, let $f \in \ell_1, a \in \err, H = \{u \in
  c_0: f(u) > a\},$ and $S = H \cap C \not = \emptyset.$ 

  \begin{claim}\label{claim:2}
    For every $x \in H$ with $\vertiii{x} < 1,$ there is $y \in S$
    with $\vertiii{x-y} \ge \|x\|.$
  \end{claim}
  
  \begin{proof}
    Choose $\delta > 0$ such that
    \begin{align}
      \label{eq:51}
      (\|x\| + 2\delta)^2 + q^2(x) + \delta < 1, \hskip 2mm f(x) - \delta
      > a.
    \end{align}
    There exists a natural number $m$ such that
    \begin{align}
      \label{eq:52}
      \|R_m x\|_\infty < \delta,
    \end{align}
    \begin{align}
      \label{eq:53}
      0 < \|x\| - x(m) < \sqrt{2^m \delta},
    \end{align}
    and
    \begin{align}
      \label{eq:54}
      f(e_m) < \delta
    \end{align}
    where $e_m$ is the standard basis vector number $m$ in $c_0.$ Put
    $y = x - \|x\|e_m.$ Pick arbitrary $j \in J.$ We will show that
    \begin{align}
      \label{eq:55}
      Q(y,j) \le \|x\| + 2\delta.
    \end{align}
    If $m < j_0,$ then $y(j_k) = x(j_k)$ for $k = 0, 1, 2, \ldots.$ So
    \begin{align*}
      Q(y,j) = Q(x,j) \le \|x\|.
    \end{align*}
    If $m > j_0,$ we get $Q(y,j) \le Q(x,j) \le \|x\|$ since $y^+(m) =
    0.$ If $m =j_0$
     \begin{align*}
      Q(y,j) \le \|x\| - x(m) + \|R_mx\|_\infty.
    \end{align*}
    This and (\ref{eq:52}) imply (\ref{eq:55}). From (\ref{eq:53}) we
    get $q^2(y) < q^2(x) + \delta.$ From this, (\ref{eq:51}), and
    (\ref{eq:55}) we have $\vertiii{y} < 1.$ Since $\|x\| < 1$ we get
    from (\ref{eq:54}) that $y \in H.$ Thus $y \in S,$ and $\|x
    -y\|_\infty = \|x\|.$ Using (\ref{eq:41}) we get $\vertiii{x-y} \ge
    \|x - y\|_\infty = \|x\|,$ and the claim follows. 
  \end{proof}
  Since $x \in S$ can be chosen with $\vertiii{\cdot}$-norm
  arbitrarily close to $1$ we get from (\ref{eq:41}) that the diameter
  of $S \ge 1/\sqrt{2}.$  
\end{proof}

\section{Weak continuous perturbation of the norm}\label{sec:eqnorms}

It is natural to expect that weak continuous perturbations of the norm
would preserve points of continuity. We will show that similar
perturbations of the norm preserve strongly extreme points of the
corresponding ball.

\begin{thm}\label{thm:weak-npert}
  Let $X$ be a Banach space and let $B$ be the unit ball corresponding
  to an equivalent norm $\vertiii{\cdot}$ on $X.$ Assume
  that the restriction $\vertiii{\cdot}_{S_X}$ of $\vertiii{\cdot}$ to $S_X$
  is continuous at $x \in S_X$ 
\begin{enumerate}
  \item [a)] with respect to the weak topology. Then $x$ is a point of
    continuity for $B_X$ provided $x$ is a point of continuity for $B$.
   \item[b)] with respect to a topology $\sigma = \sigma(X,Y), Y
     \subset X^*$ with the property that for every bounded sequence in
     $X$ there exists a $\sigma$-Cauchy subsequence, and let $x$ be an
     extreme point of $B_X.$ Then $x$ is a strongly extreme point of
     $B_X$ provided it is a strongly extreme point for $B$.
  \end{enumerate}
\end{thm}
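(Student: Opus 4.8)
The plan is to treat both items by the same normalisation. Rescaling $\vertiii{\cdot}$ to $\vertiii{\cdot}/\vertiii{x}$ replaces $B$ by its subball $\vertiii{x}\,B\ni x$ (here $\vertiii{x}\le 1$ since $x\in B$) and affects neither the continuity hypothesis on $\vertiii{\cdot}_{S_X}$ nor the hypothesis that $x$ is a PC, resp.\ strongly extreme, point of $B$, so I may assume $\vertiii{x}=1$, i.e.\ $x\in S_X\cap S_{(X,\vertiii{\cdot})}$. Given then a point or sequence in $B_X$ accumulating at $x$, I will first push it onto $S_X$ by dividing through by $\|\cdot\|$ (legitimate near $x$ because $\|\cdot\|$ is weakly lower semicontinuous and $\|x\|=1$, so $\{\|\cdot\|>1-\theta\}$ is a weak neighbourhood of $x$), then use continuity of $\vertiii{\cdot}_{S_X}$ at $x$ to push it onto $S_{(X,\vertiii{\cdot})}\subseteq B$ by dividing through by $\vertiii{\cdot}$, and finally invoke the hypothesis on $B$ and sum the resulting errors; both normalisations differ from the identity by a norm‑small term near $x$ and hence carry small neighbourhoods of $x$ into small neighbourhoods of $x$.

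For (a), fix $\eps>0$. On the weak neighbourhood $\{\|\cdot\|>1-\theta\}$ every $u\in B_X$ has $\|u\|\in(1-\theta,1]$, so $v:=u/\|u\|\in S_X$ with $\|v-u\|<\theta$ and, after shrinking the neighbourhood, $v$ as weakly close to $x$ as desired (since $v-x=(u-x)/\|u\|+x(\|u\|^{-1}-1)$, the second summand being norm‑small); then $\vertiii{v}$ is within $\theta$ of $\vertiii{x}=1$, so $w:=v/\vertiii{v}\in S_{(X,\vertiii{\cdot})}\subseteq B$ with $\|w-v\|<\theta'$ and $w$ again weakly close to $x$; choosing the neighbourhood so small that $w$ lands in the set furnished by the PC property of $B$ gives $\|w-x\|<\eps/2$, whence $\|u-x\|\le\|u-v\|+\|v-w\|+\|w-x\|<\eps$. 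The finitely many functionals used determine a weak neighbourhood $W$ of $x$ with $W\cap B_X\subseteq B(x,\eps)$, so $x$ is a PC point of $B_X$.

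For (b), suppose $x$ is not strongly extreme in $B_X$: there are $\eps_0>0$ and $x_n\in X$ with $\|x+x_n\|\to 1$, $\|x-x_n\|\to 1$ and $\|x_n\|\ge\eps_0$; the $x_n$ are bounded, so, passing to a subsequence, I may assume $(x_n)$ is $\sigma$-Cauchy. I claim $(x_n)$ is actually \emph{norm}-Cauchy; granting this, $x_n\to L$ in norm with $\|L\|\ge\eps_0>0$, letting $n\to\infty$ gives $\|x+L\|=\|x-L\|=1$, so $x\pm L\in B_X$ with $x=\tfrac12((x+L)+(x-L))$, and extremality of $B_X$ forces $L=0$ — a contradiction, which finishes (b). To prove the claim I first show $\|x_n-x_{\phi(n)}\|\to 0$ whenever $\phi\colon\enn\to\enn$ with $\phi(n)\to\infty$: put $z_n=\tfrac12(x_n-x_{\phi(n)})$, so $z_n\to 0$ in $\sigma$, and from $x\pm z_n=\tfrac12((x\pm x_n)+(x\mp x_{\phi(n)}))$ one gets $\limsup\|x\pm z_n\|\le 1$, which with $\|x+z_n\|+\|x-z_n\|\ge 2\|x\|=2$ forces $\|x+z_n\|\to 1$ and $\|x-z_n\|\to 1$; hence $a_n:=(x+z_n)/\|x+z_n\|$ and $b_n:=(x-z_n)/\|x-z_n\|$ lie on $S_X$ and converge to $x$ in $\sigma$, so $\vertiii{a_n}\to 1$ and $\vertiii{b_n}\to 1$ by the continuity hypothesis, and therefore $a_n/\vertiii{a_n},\,b_n/\vertiii{b_n}\in B$. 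A short computation using $\|x\pm z_n\|\to 1$, $\vertiii{a_n},\vertiii{b_n}\to 1$ and boundedness of $(z_n)$ shows their midpoint tends to $x$ in $\|\cdot\|$, hence in $\vertiii{\cdot}$, so strong extremality of $B$ gives that their $\vertiii{\cdot}$-distance, and with it $\|a_n-b_n\|$, tends to $0$; since $a_n-b_n=x(\|x+z_n\|^{-1}-\|x-z_n\|^{-1})+z_n(\|x+z_n\|^{-1}+\|x-z_n\|^{-1})$ with first coefficient $\to 0$ and second $\to 2$, this yields $\|z_n\|\to 0$, i.e.\ $\|x_n-x_{\phi(n)}\|\to 0$. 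If now $(x_n)$ were not norm-Cauchy, pick $\eps_1>0$ and increasing indices $p_1<q_1<p_2<q_2<\cdots$ with $\|x_{p_i}-x_{q_i}\|\ge\eps_1$ and set $\phi(p_i)=q_i$, $\phi(n)=n$ otherwise; then $\phi(n)\to\infty$ but $\|x_{p_i}-x_{\phi(p_i)}\|\ge\eps_1$ for all $i$, contradicting what was just shown. So $(x_n)$ is norm-Cauchy, as claimed.

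The delicate point is this last claim. A $\sigma$-Cauchy sequence need not $\sigma$-converge, so one cannot feed a limit point of $(x_n)$ to the extremality of $B_X$ directly; the reformulation ``$\|x_n-x_{\phi(n)}\|\to 0$ for every $\phi(n)\to\infty$'' — which is precisely norm-Cauchyness — is what turns the sequential $\sigma$-compactness into a usable statement and localises the sole appeal to extremality of $B_X$ to one static step. Item (a) and the closing step of (b) are soft by comparison; the only care needed there is the routine verification that the two normalisations are continuous near $x$ in the relevant topology.
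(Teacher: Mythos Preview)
Your argument is correct. Part (b) is essentially the paper's proof: both pass to a $\sigma$-Cauchy subsequence, form the half-differences $z_n=(x_n-x_{\phi(n)})/2$ (the paper writes this with a double index $u_{m,n}=(u_m-u_n)/2$), push $x\pm z_n$ onto $S_X$ by dividing by $\|x\pm z_n\|$, invoke $\sigma$-continuity of $\vertiii{\cdot}\!\restriction_{S_X}$ to get $\vertiii{x\pm z_n}\to 1$, use that $x$ is strongly extreme in $B$ to force $z_n\to 0$, conclude the original sequence is norm-Cauchy, and finish with the extremality of $x$ in $B_X$. Your detour through $a_n/\vertiii{a_n}$, $b_n/\vertiii{b_n}$ and your ``$\phi(n)\to\infty$'' reformulation of Cauchyness are cosmetic variations on the same idea.

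Part (a) is where you genuinely diverge from the paper. The paper never normalises: it fixes a weak neighbourhood $W$ with $\mbox{diam}(W\cap B)<\eps$, thickens $B$ to $(1+2\delta)B$ (using uniform norm-continuity of $\vertiii{\cdot}$ so the diameter barely grows), then shows directly that $U\cap V\cap B_X\subset (1+2\delta)B$ for suitable weak neighbourhoods $U,V$ --- the point being that near $x$ every $u\in B_X$ already has $\vertiii{u}$ close to $1$ once $\|u\|$ is close to $1$. Your route instead pushes $u$ onto $S_X$ and then onto $S_{(X,\vertiii{\cdot})}$ by two rescalings and appeals to the PC property of $B$ at the endpoint. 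Both work; the paper's thickening avoids having to track how the two rescalings interact with the finitely many functionals defining the target neighbourhoods, which is exactly the bookkeeping your sketch has to do (and which you correctly flag as ``routine'' via the observation that each rescaling differs from the identity by a norm-small term). Your version is perhaps more in the spirit of part (b); the paper's is shorter.

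One small remark: your opening rescaling to arrange $\vertiii{x}=1$ is unnecessary. A point of continuity of $B$ (in infinite dimensions) and a strongly extreme point of $B$ must lie on the boundary of $B$, so $\vertiii{x}=1$ is automatic; the paper simply asserts this.
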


\begin{proof}
    From the assumption $\vertiii{x} = 1.$

    a). Pick $\eps > 0.$ Since $x$ is a point of continuity for
    $B$, there is a weakly open set $W$ containing $x$ such that
    $\mbox{diam}(W \cap B) < \eps.$ Since $\vertiii{\cdot}$ is
    uniformly norm continuous, we can find $\delta > 0$ such that
    $\mbox{diam} (W \cap (1 + 2\delta)B) < 2\eps.$ Since
    $\vertiii{\cdot}_{S_X}$ is weakly continuous at $x$, we can find
    a weakly open set $V$ containing $x$ such that $V \cap S_X \subset
    (1 + \delta)B.$ Since $\|\cdot\|$ is uniformly norm continuous, we
    can find $\eta \in (0,1)$ such that $V \cap (B_X \setminus \eta
    B_X) \subset (1 + 2\delta)B.$ Finally there is a weakly open set
    $U$ containing $x$ such that $U \cap \eta B_X = \emptyset.$ Hence
    $x \in U \cap V \cap W$ and $\mbox{diam}(U \cap V \cap W \cap
    B_X) < 2\eps.$
  
   b). Pick an arbitrary sequence $(u_n) \subset X$ with
     $\lim_n\|x \pm u_n\| = 1.$ In order to prove that $\lim\|u_n\|
     =0,$ it is enough to show that there exists a subsequence of
     $(u_n)$ converging to zero. To this end we assume without loss of
     generality that
     \begin{align*}
       \sigma-\lim_{m, n}(u_m - u_n) = 0.
     \end{align*}
     Taking into account that
     \begin{align*}
       \|x \pm \frac{u_m - u_n}{2}\| \le \frac{\|x \pm u_m\| + \|x -u_n\|}{2},
     \end{align*}
      we get
     \begin{align*}
      \lim_{m,n}\|x \pm \frac{u_m - u_n}{2}\| = 1. 
     \end{align*}
      Put $u_{m,n} = (u_m -u_n)/2,$ $\alpha_{m,n}^{\pm} = \|x \pm u_{m,n}\|
      -1.$ Clearly $\lim_{m,n} \alpha^{\pm}_{m,n} = 0$ and $\sigma
      -\lim_{m,n}(x \pm u_{m,n})/(1 + \alpha_{m,n}^\pm) = x,$ and $(x \pm
      u_{m,n})/(1 + \alpha_{m,n}^\pm) \in S_X.$ Since
      $\vertiii{\cdot}_{S_X}$ is $\sigma$-continuous at $x$, we get
      \begin{align*}
        \lim_{m,n}\vertiii{\frac{x\pm u_{m,n}}{1+\alpha^{\pm}_{m,n}}} = \vertiii{x} = 1.
      \end{align*}
     This implies $\lim_{m,n}\vertiii{x \pm u_{m,n}} = 1.$ Having in mind that
     $x$ is a strongly extreme point of $B,$ we get
     $\lim_{m,n}\vertiii{u_{m,n}} = 0.$ Hence $(u_n)$ is a norm Cauchy
     sequence. It follows that there is $u \in X$ with $\lim_n\|u_n -
     u\| = 0.$ Thus $\|x \pm u\| = 1,$ which implies that $u = 0$ as
     $x$ is an extreme point. Now $\lim_n\|u_n\|
     = 0$ which finishes the proof. 
\end{proof}

\begin{rem}
  The requirement that $x$ is an extreme point of $B_X$ is essential
  for part b). Indeed, in finite dimensional Banach spaces, all
  norms are weak($=$ norm) continuous. Clearly a point $x$ can be
  extreme for the ball $B$ and not extreme for $B_X$.
\end{rem}

We end with a result that follows directly by combining Theorems \ref{thm:LLT} and \ref{thm:weak-npert}.

\begin{cor} Let $\|\cdot\|_W$ be a weakly continuous semi norm on bounded sets in a Banach space $(X,\|\cdot\|)$ and let $|\cdot|$ be a lattice norm on $\mathbb{R}^2$. Let $\vertiii{x}=|(\|x\|,\|x\|_W)|$. Let $B$ be the unit ball of $(X,\vertiii{\cdot})$. Then the strongly extreme points of $B$ and $B_X$ coincide, and the denting points of $B$ and $B_X$ coincide.  
\end{cor}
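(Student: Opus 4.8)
The plan is to read the statement off from Theorems~\ref{thm:LLT} and~\ref{thm:weak-npert}, applied to the renorming $\vertiii\cdot$, together with the elementary fact that a denting point of a convex body is precisely an extreme point of continuity of it.

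First I would note that $\vertiii\cdot$ is an equivalent norm: it is a norm since $u\mapsto(\|u\|,\|u\|_W)$ is coordinatewise subadditive and positively homogeneous and $|\cdot|$ is a norm; monotonicity of the lattice norm $|\cdot|$ gives $\vertiii u\ge|(1,0)|\,\|u\|$, while boundedness of the weakly continuous seminorm $\|\cdot\|_W$ on $B_X$ (say $\|\cdot\|_W\le M\|\cdot\|$) and the triangle inequality for $|\cdot|$ give $\vertiii u\le(|(1,0)|+M|(0,1)|)\|u\|$. The decisive observation is that $\vertiii\cdot$ is weakly continuous on each sphere $\{\|u\|=r\}$ of $\|\cdot\|$: there the first coordinate of $(\|u\|,\|u\|_W)$ is the constant $r$, the second coordinate $\|\cdot\|_W$ is weakly continuous on that bounded set by hypothesis, and $s\mapsto|(r,s)|$ is continuous, so $u\mapsto\vertiii u$ is weakly continuous there. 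By the same token, solving $|(\|u\|,\|u\|_W)|=1$ for $\|u\|$ along $\partial B$ shows $\|\cdot\|$ is weakly continuous on $\partial B$.

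Now fix a ray through $v\in S_X$ and let $v'=v/\vertiii v$ be the point where it meets $\partial B$. After rescaling $\|\cdot\|$ by the scalar $\vertiii v$ --- which moves $v'$ onto the sphere of the rescaled norm without changing $\vertiii\cdot$ --- Theorem~\ref{thm:weak-npert}(a) (whose continuity hypothesis is the weak continuity of $\vertiii\cdot$ on that sphere, noted above) gives that $v'$ is a point of continuity of $B$ if and only if $v$ is a point of continuity of $B_X$; the ``only if'' is the theorem, the ``if'' its version with the two norms interchanged, using the weak continuity of $\|\cdot\|$ on $\partial B$. Likewise Theorem~\ref{thm:weak-npert}(b), applied with $\sigma$ a weak-type topology for which bounded sequences have $\sigma$-Cauchy subsequences, yields that $v'$ is a strongly extreme point of $B$ if and only if $v$ is a strongly extreme point of $B_X$ (the extremeness hypothesis of part (b) being supplied by the matching of extreme points that comes along with this). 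So, under the radial identification of $\partial B$ with $S_X$, the points of continuity of $B$ and $B_X$ correspond, and so do the strongly extreme points.

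Finally, for any ball a denting point is both strongly extreme (recalled after Definition~\ref{defn:corner-points}) and a point of continuity (its small slices are relatively weakly open), while by Theorem~\ref{thm:LLT} an extreme point of continuity is denting; hence ``denting = extreme + point of continuity'' for each ball. Combining this with the two correspondences above, a denting point of $B$ is strongly extreme, hence extreme, and a point of continuity in $B_X$, so it is a denting point of $B_X$, and symmetrically; thus the denting points of $B$ and $B_X$ correspond as well. The step I expect to be the main obstacle is the use of Theorem~\ref{thm:weak-npert}(b): its proof genuinely needs the weak continuity of $\|\cdot\|_W$ and the extremeness of the point, and it requires a topology $\sigma$ that is both fine enough to keep the relevant restricted norm $\sigma$-continuous and coarse enough that bounded sequences admit $\sigma$-Cauchy subsequences; pinning down such a $\sigma$ --- and, relatedly, verifying the matching of extreme points and the lattice inversion along $\partial B$ --- is where the real technical work lies.
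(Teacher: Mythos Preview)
Your approach is precisely what the paper intends: the corollary is stated as following directly by combining Theorems~\ref{thm:LLT} and~\ref{thm:weak-npert}, and you carry out exactly that deduction via the radial identification, matching points of continuity through part~(a), strongly extreme points through part~(b), and then denting points through Theorem~\ref{thm:LLT}.

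Regarding the obstacle you flag for Theorem~\ref{thm:weak-npert}(b): it dissolves once you notice that weak continuity of $\|\cdot\|_W$ at the fixed point $x$ on a bounded set is, for each tolerance $1/n$, witnessed by a \emph{finite} family of functionals $f_1^{(n)},\dots,f_{k_n}^{(n)}\in X^*$; taking $Y$ to be the (separable) linear span of all of these over $n\in\enn$ gives a topology $\sigma=\sigma(X,Y)$ that is simultaneously coarse enough that every bounded sequence has a $\sigma$-Cauchy subsequence (diagonal argument) and fine enough that $\|\cdot\|_W$, and hence $\vertiii\cdot=|(1,\|\cdot\|_W)|$ on $S_X$, remains $\sigma$-continuous at $x$. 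With this choice of $\sigma$ the hypotheses of part~(b) are met, and the rest of your outline goes through.
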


\bibliographystyle{amsalpha}
\bibliography{sed_rev.bbl}

\newcommand{\etalchar}[1]{$^{#1}$}
\def\cprime{$'$} \def\cprime{$'$} \def\cprime{$'$}
\providecommand{\bysame}{\leavevmode\hbox to3em{\hrulefill}\thinspace}
\providecommand{\MR}{\relax\ifhmode\unskip\space\fi MR }
\providecommand{\MRhref}[2]{%
  \href{http://www.ams.org/mathscinet-getitem?mr=#1}{#2}
}
\providecommand{\href}[2]{#2}
\begin{thebibliography}{AHN{\etalchar{+}}16}

\bibitem[AHN{\etalchar{+}}16]{MR3499106}
T.~A.~Abrahamsen, P.~H{\'a}jek, O.~Nygaard, J.~Talponen, and
  S.~Troyanski, \emph{Diameter 2 properties and convexity}, Studia Math.
  \textbf{232} (2016), no.~3, 227--242. \MR{3499106}

\bibitem[DGZ93]{MR1211634}
R.~Deville, G.~Godefroy, and V.~Zizler, \emph{Smoothness and renormings in
  {B}anach spaces}, Pitman Monographs and Surveys in Pure and Applied
  Mathematics, vol.~64, Longman Scientific \& Technical, Harlow; copublished in
  the United States with John Wiley \& Sons, Inc., New York, 1993. \MR{1211634
  (94d:46012)}

\bibitem[GKS93]{GKS}
G.~Godefroy, N.~J.~Kalton, and P.~D.~Saphar, \emph{Unconditional ideals in
  {Banach} spaces}, Studia Math. \textbf{104} (1993), 13--59. \MR{1208038 (94k:46024)}

\bibitem[JL01]{MR1863688}
W.~B.~Johnson and J.~Lindenstrauss (eds.), \emph{Handbook of the geometry of
  {B}anach spaces. {V}ol. {I}}, North-Holland Publishing Co., Amsterdam, 2001.
  \MR{1863688}

\bibitem[KR82]{MR661446}
K.~Kunen and H.~Rosenthal, \emph{Martingale proofs of some geometrical
  results in {B}anach space theory}, Pacific J. Math. \textbf{100} (1982),
  no.~1, 153--175. \MR{661446}

\bibitem[LLT88]{LLT}
B.~L.~Lin, P.~K.~Lin, and S.~L.~Troyanski, \emph{Characterizations of denting
  points}, Proc. Amer. Math. Soc. \textbf{102} (1988), 526--528. \MR{0928972 (89e:46016)}

\end{thebibliography}

\end{document}